\newcommand\scalemath[2]{\scalebox{#1}{\mbox{\ensuremath{\displaystyle #2}}}}
\tikzset{decorated arrows/.style={
    postaction={
        decorate,
        decoration={
            markings,
            mark=between positions 0 and 1 step 15mm with {\arrow[black]{stealth};}
            }
        },
    }
}
\tikzset{decorated arrows2/.style={
    postaction={
        decorate,
        decoration={
            markings,
            mark=at position 15mm with {\arrow[black]{stealth};}
            }
        },
    }
}
\def\set@curr@file#1{%
  \begingroup
    \escapechar\m@ne
    \xdef\@curr@file{\expandafter\string\csname #1\endcsname}%
  \endgroup
}
\def\quote@name#1{"\quote@@name#1\@gobble""}
\def\quote@@name#1"{#1\quote@@name}
\def\unquote@name#1{\quote@@name#1\@gobble"}
\theoremstyle{plain}
\newtheorem{thm}{Theorem}
\newtheorem{lem}[thm]{Lemma}
\newtheorem{prop}[thm]{Proposition}
\newtheorem{maintheorem}{Theorem}
 \theoremstyle{definition}
\newtheorem{defn}{Definition}[section]
\theoremstyle{remark}
\newtheorem{rem}{Remark}
\theoremstyle{plain}
\newcommand{\RR}{\mathbb{R}}
\newcommand{\dpt}{\displaystyle}
\newcommand{\RN}[1]{%
  \textup{\uppercase\expandafter{\romannumeral#1}}%
}
\author[J. P. S. M. de Carvalho and A. A. Rodrigues]{Jo\~ao P.S. Maur\'icio de Carvalho$^{*1}$ and Alexandre A. Rodrigues$^{2}$ \\
\\
$^*$\MakeLowercase{up200902671@up.pt} \\ $^2$\MakeLowercase{alexandre.rodrigues@fc.up.pt} \\
\\
$^{1,2}$Faculty of Sciences, University of Porto, \\ Rua do Campo Alegre s/n, Porto 4169-007, Portugal \\ \\
$^2$Centre for Mathematics, University of Porto, \\ Rua do Campo Alegre s/n, Porto 4169-007, Portugal \\
}
\begin{document}

\subjclass[2010]{37D45, 37G10, 37G15, 03C25}
\keywords{SIR model, Seasonality, Basic reproduction number, Backward bifurcation, Strange attractors, Observable chaos} 
\thanks{JPSMC was supported by Project MAGIC POCI-01-0145-FEDER-032485, funded by FEDER via
COMPETE 2020 - POCI and by FCT/MCTES via PIDDAC. AR was partially supported by CMUP (UIBD/MAT/00144/2020), which is funded by Funda\c{c}\~ao para a Ci\^encia e a Tecnologia (FCT) with national  and European structural funds through the programs FEDER, under the partnership agreement PT2020. AR also benefits from the grant CEECIND/01075/2020 of the Stimulus of Scientific Employment -- 3rd Edition (Individual Support)  awarded by FCT.  \\ $^*$Corresponding author.}

\title[Strange attractors in a modified SIR model]
{Strange attractors in a dynamical system \\ inspired by a seasonally forced SIR model}

\date{\today}

\begin{abstract}

We analyze a multiparameter periodically-forced dynamical system inspired in the SIR endemic model. We show that the condition on the \emph{basic reproduction number} $\mathcal{R}_0 < 1$ is not sufficient to guarantee the elimination of \emph{Infectious} individuals due to a \emph{backward bifurcation}. Using the theory of rank-one attractors, for an open subset in the space of parameters where $\mathcal{R}_0<1$, the flow exhibits \emph{persistent strange attractors}. These sets are not confined to a tubular neighbourhood in the phase space, are numerically observable and shadow the ghost of a two-dimensional invariant torus.
Although numerical experiments have already suggested that periodically-forced biological models may exhibit observable chaos, a rigorous proof was not given before. Our results agree well with the empirical belief that intense seasonality induces chaos.

This work provides a preliminary investigation of the interplay between seasonality, deterministic dynamics and the prevalence of strange attractors in a nonlinear forced system inspired by biology.
\end{abstract}

\maketitle

\section{Introduction}\label{sec:intro}

The emergence of mathematical models associated to epidemiology has made an important contribution to fight against a wide range of diseases, such as AIDS, tuberculosis, hepatitis and most recently CoViD-19 \cite{Carvalho2020, Bonyah2020, Rajagopal2020, Cobey2020, Britton2010}. Simple models have been generalised in various ways  in order to decide about preventive measures to contain the disease.  

The SIR model \cite{Kermack1932,Dietz1976} is one of the simplest compartmental models, and many models come from this basic form. It consists of three compartments: susceptible (S), infectious (I) and recovered (R) individuals, and is reasonably predictive for infectious diseases that are transmitted from human to human, and where recovery confers  resistance, such as measles, mumps and rubella \cite{ParkBolker2020, Keeling2001}. In general, SIR models have a global attractor in a homogeneous environment \cite{Keeling2001}.

Although for some specific diseases the impact of seasonality is minor and can be safely neglected in modeling them, in other cases, for example for the childhood diseases and for influenza, this impact is extremely important and must be explicitly modeled. Indeed, the current state of research indicates empirical evidence of the ubiquity of seasonal forces in epidemic models, including factors that influence disease dynamics  over time, such as school hours, climate change, human phenomena, environmental changes, political decisions, among others \cite{Buonomo2018}.  For example, seasonal flu is a striking example where seasonal forces play a crucial role since there are periods of the year  when the incidence of this flu has a high impact \cite{Moghadami2017}.

In mathematical models that include \emph{seasonal forcing}, transmission rates can be modulated through periodic functions \cite{Barrientos2017, Duarte2019, Rashidinia2018} -- they are more realistic in this type of cases. These non-autonomous differential equations add further levels of complexity to classical models.

\subsection{State of the art on periodically-perturbed models}
In 2001, Keeling {\it et al.}~\cite{Keeling2001} have  {analyzed}  a seasonally forced SIR  model (whose attention is focused on the dynamics of measles, whooping cough and rubella) and  concluded that the dynamics of diseases with more impact on children (who have been subjected to seasonality) is more complex, contrary to what had been expected until then. 
 Bilal {\it et al.}~\cite{Bilal2016} studied the dynamics of various types of models applied to epidemiology where the rate of disease transmission was modulated through a periodic function and concluded that the emergence of strange non-chaotic attractors predicted the growth of epidemics.
 
In 2017, Barrientos \emph{et al.}~\cite{Barrientos2017} aimed to understand to what extent the consequences of seasonality had an impact on epidemic models and have shown, analytically, the existence of topological horseshoes (chaos) in the sense of \cite{Medio2009}  under  the existence of seasonality in the transmission rate of the disease, low birth and mortality rates, and high rates of recovery and transmission.  These   horseshoes are hyperbolic, have zero Lebesgue measure and are invisible in terms of numerics.

The \emph{basic reproduction number}, denoted by $\mathcal{R}_0$, may be seen as a threshold parameter, intended
to quantify the spread of disease by estimating the average number of secondary infections, in a completely susceptible population,
giving an indication of the invasion strength of an epidemic \cite{Li2011}. It measures the
number of secondary cases generated by an infectious case once an epidemic is ongoing.

Nowadays, $\mathcal{R}_0$ has been widely used as a measure of disease strength to estimate the effectiveness of control measures and to form the backbone
of disease-management policy. 
Statistically, if $\mathcal{R}_0 < 1$, then the spread of the disease slows down and is eliminated,  whereas if $\mathcal{R}_0 > 1$, then the disease persists \cite{Jones2007}.
However, in dynamical models, this information about $\mathcal{R}_0$ may fail: diseases can persist with $\mathcal{R}_0 < 1$  {\cite{Li2011}}. 

\subsection{Novelty}

The contribution of this paper to the literature is twofold. First, we exhibit a multiparameter dynamical system inspired by the SIR endemic model with $\mathcal{R}_0<1$  {for which the  {\emph{Infectious}} component does not vanish.} Second, we prove that, under a seasonal periodic forcing {$\Phi$} with high frequency  {$\omega$}, \emph{strange attractors} appear persistently in its flow.  

The rigorous proof of the strange character of an invariant set is a great challenge and the proof of the \emph{abundance} (with respect to the Lebesgue measure) of such attractors is a very involved task. 
 Although the model under analysis may not be biologically realistic, the persistence of chaotic dynamics is  {relevant} because it means that the phenomenon is numerically \emph{observable} (in the phase space) and \emph{persistent} (it occurs with positive probability in the space of parameters).

 \subsection{Structure}
 We analyze a periodically-forced {dynamical system inspired by the SIR endemic model to investigate the influence of seasonality on the disease dynamics.} In Section \ref{THEMODEL} we describe and motivate the structure of our model, compute the \emph{basic reproduction number} ($\mathcal{R}_0$) and state the main results of the study.
We show in Section \ref{compact} that the flow is positively flow-invariant when restricted to a compact set.
In Sections \ref{DFE_lab} and \ref{END_lab} we study the equilibria  and we present the proof of the first main result. Also, we briefly analyze the sensitivity of the \emph{basic reproduction number} with respect to the parameters of the dynamical system.
We prove {in} Section \ref{SA_lab} our second main result. 
Finally, in Section \ref{discussion} we discuss the results and relate with others in the literature.  

\section{Setting and main results}
\label{THEMODEL}
In this section, we introduce the model under consideration and we state the main results, as well as the structure of the article. 
\subsection{Model}
 We are going to divide the individuals of a given population into three classes of individuals \cite{Dietz1976,LiTeng2017}:

\smallskip

\begin{itemize}
\item \emph{Susceptible (S)}: number of individuals that are currently not infected, but can contract the disease;  
\medskip
\item  \emph{Infectious (I)}: number of individuals who are currently infected and can actively transmit  the disease to a susceptible individual, until their recovery;
\medskip
\item \emph{Recovered (R)}: number of individuals who currently can neither be infected, nor can infect susceptible individuals. This comprises individuals who have  definitive immunity  because they have recovered from a recent infection.
\end{itemize}

\smallskip

\noindent The model under consideration assumes that the susceptible individuals have never been in contact with the disease. However, they can become infected and belong to the class of infectious individuals who support the spread of the disease. When they recover they are immune to the disease. Inspired in \cite{Dietz1976,LiTeng2017, ZhangChen1999,Perez2019}, the nonlinear system of ordinary differential equations (ODE) in the variables $S$, $I$ and $R$ (depending on the time $t$), is given by the following one-parameter family:

\begin{equation}
\label{modelo}
\begin{array}{lcl}
\dot{X} = \mathcal{F}_\gamma(X) \quad \Leftrightarrow \quad
\begin{cases}
&\dot{S} =  S(A-S) - \beta_\gamma(t)IS \\
\\
&\dot{I} =  \beta_\gamma(t)IS -  {(\mu + d) I} - \dfrac{r I}{a + I} \\
\\	
&\dot{R} =  \dfrac{r I}{a + I} - \mu R,
\end{cases}
\end{array}
\end{equation}

\smallskip

\noindent where 

\smallskip

$$
\begin{array}{lcl}
X(t) &=& \left( S(t), I(t), R(t) \right), \\
\\
\dot{X} &=& (\dot{S}, \dot{I}, \dot{R}) \,\,\, = \,\,\, \dpt \left(\frac{\mathrm{d}S}{\mathrm{d}t},\frac{\mathrm{d}I}{\mathrm{d}t},\frac{\mathrm{d}R}{\mathrm{d}t}\right), \\
\\
\beta_\gamma(t) &=& \beta_0 \left(1+\gamma \Phi(\omega t)\right).
\end{array}
$$

\bigskip

\noindent The vector field associated to \eqref{modelo} will be called by $\mathcal{F}_\gamma$ and the associated flow is $\varphi_\gamma \left(t, (S_0, I_0, R_0)\right)$, $t \in \mathbb{R}_0^+$, $(S_0, I_0, R_0) \in (\mathbb{R}_0^+)^3$. Figure \ref{boxes} illustrates the interaction between the classes of susceptible, infectious and recovered individuals in model \eqref{modelo}.
 
\usetikzlibrary{arrows,positioning}
\begin{center}
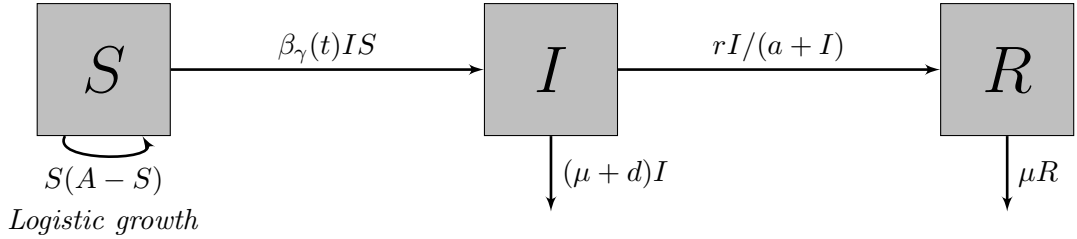
\begin{figure}[ht!]
\begin{tikzpicture}
[
auto,
>=latex',
every node/.append style={align=center},
int/.style={draw, minimum size=1.75cm}
]

    \node [fill=lightgray,int] (S)             {\Huge $S$};
    \node [fill=lightgray,int, right=5cm] (I) {\Huge $I$};
    \node [fill=lightgray,int, right=11cm] (R) {\Huge $R$};
    
    \node [below=of I] (i) {} ;
    \node [below=of R] (r) {};
    
    \coordinate[right=of I] (out);
    \path[->, auto=false,line width=0.35mm] (S) edge node {$\beta_\gamma(t) I S$ \\[.6em]} (I)
                          (I) edge node {$rI/(a+I)$       \\[.6em] } (R) 
                          (S) edge  [out=-120, in=-60] node[below] {$S(A-S)$ \\ [0.2em] \emph{Logistic growth}} (S);

    \path[->, auto=false,line width=0.35mm] (I) edge [] node[right]{${(\mu + d) I}$} (i) ;
    
    \path[->, auto=false,line width=0.35mm] (R) edge [] node[right]{$\mu R$} (r) ;

\end{tikzpicture}
\caption{\small Schematic diagram of model \eqref{modelo}. Boxes represent compartments, and arrows indicate the flow between boxes.}
\label{boxes}
\end{figure}
\end{center}

\subsection{Interpretation of the constants}
The parameters of \eqref{modelo} may be interpreted as follows: \\

\begin{description}
\item[$A$] carrying capacity of susceptible people when $\beta_0=0$ \emph{i.e.} in the absence of disease; \\
\item[$\gamma$] amplitude of the seasonal variation that oscillates between $ \beta_0\left (1+\gamma \min_{t\in [0,T]} \Phi(t)\right)>0$ in the low season, and $\beta_0\left (1+\gamma \max_{t\in [0,T]} \Phi(t)\right)$ in the high season, for some $T>0$;  \\
\item[ $\Phi(\omega t)$]  effects of periodic seasonality on $\beta_0$ over the time with frequency $\omega>0$;  \\
\item[ $\mu$] natural death rate of infected and recovered individuals;\\
 \item[ $d$] death rate of infected individuals due to the disease;\\
\item[ $r$] cure rate;\\
\item[ $a$] measures the effects of a delay in the response treatment (proportional to the saturation of health services; see Remark \ref{Rem1} later);\\
\item[ $\beta_0$] transmission rate of the disease when $\gamma=0$ \emph{i.e.} in the absence of seasonality. The parameter $\gamma$ ``measures the deformation'' of the transmission rate due to the seasonality.\\
\end{description}

\subsection{Motivation}
System \eqref{modelo} has been inspired in the classical SIR model \cite{Kermack1932} by the reasons we proceed to explain:  \\
\begin{itemize}
\item In the \emph{Susceptible} population, we have considered the {\emph{logistic growth}} $S(A-S)$ instead of an exponential growth as in \cite[Equation (1.4)]{LiTeng2017} and \cite[Equation (1)]{ZhangChen1999}; \\

\item   The disease transmission rate $\beta_\gamma$ is given by a non-autonomous periodic function able to capture seasonal variations \cite{Dietz1976,Keeling2001} instead of a constant map; \\ 

\item  The transition from  {\emph{Infectious}} to \emph{Recovered} is the homographic function $\frac{r I}{a + I}$ since the medical conditions are limited and do not grow linearly with the number of  {{\it Infectious}} (see Remark \ref{Rem1} and \cite[Subsection 2.2]{ZhangLiu2008}). In contrast to the findings of \cite{Perez2019,ZhangLiu2008} our choice stresses the effects of a delay in the response treatment.

\end{itemize}

\begin{rem}
\label{Rem1}
A different transition rate from  {\emph{Infectious}} to \emph{Recovered}  has been studied in \cite{Perez2019} \emph{via} the saturated Holling type II treatment rate
$$
T_{(\lambda, \varepsilon)}(I)=\dfrac{\lambda I}{1 + \varepsilon I}, 
$$
 where   $\lambda>0$ is the maximal treatment rate for each individual per  unit of time and $\varepsilon>0$  {is the constant that measures the saturation effect caused by the infected population being delayed for treatment.} Observe that 
$$
\lim_{\varepsilon \rightarrow 0} \dfrac{\lambda I}{1 + \varepsilon I} =  \lambda I \qquad \text{and} \qquad \lim_{\varepsilon \rightarrow +\infty} \dfrac{\lambda I}{1 + \varepsilon I} =  0,
$$
which means that $T_{(\lambda, \varepsilon)}(I)$ is maximum when the saturation of health services is minimum and vice-versa.    

\end{rem}

\begin{rem}
\label{Rem_new_2}
As time evolves, the Infected ($I$) and Recovered ($R$) populations become large and have the same order as $S$. Therefore $I$, $R$ are also impacted by the competition effects and the logistic law in all components would be more suitable to model the reality. Hence, the model \eqref{modelo} under consideration is not biologically realistic.

\end{rem}

\subsection{Hypotheses}
We assume the following conditions, natural in periodically-forced epidemiological contexts:

\smallskip

\begin{itemize}
\item[\textbf{(C1)}]  All parameters are nonnegative; \\ 
\item[\textbf{(C2)}] For all $t \in \RR_0^+$, $S(t)\leq A$; \\
\item[\textbf{(C3)}] For $T>0$ and $\gamma \geq 0$, the map $\Phi: \RR\rightarrow \RR^+$ is $C^3$,  $T$-periodic, $\dpt \dfrac{1}{T}\int_0^T \beta_{\gamma}(t)\, \mathrm{d}t >0 $ and has (at least) two nondegenerate critical points.\\
\end{itemize}

\noindent The phase space associated to \eqref{modelo} is a subset of $(\mathbb{R}_0^+)^3$, induced with the usual topology, and the set of parameters is given by:

$$  {{\Lambda} = \left\{ (A,r,\beta_0,a,\mu,d) \in (\mathbb{R}^+)^6 \right\}, \qquad \gamma\in [0,\varepsilon] \qquad \text{and}\qquad  \omega \in \mathbb{R}^+.}$$

\bigskip

\noindent The parameters $\gamma$ and $\omega$ are not included in $\Lambda$ because they will play a particular role in the emergence of strange attractors in Section \ref{SA_lab}. 

\begin{rem} The variables $S, I, R$ may be interpreted as proportions over the size of the population $N(t)=S(t)+I(t)+R(t)>0$. In our numerics  {(Figures \ref{DFEpoint} and \ref{attr_per_sol}),  the variables and parameter values (unrelated with the reality), identified with the superscript $\sim$, are associated to equation \eqref{modelo} after the following change of variables}:

$$\tilde{S} \mapsto \frac{S}{N}, \qquad \tilde{I} \mapsto \frac{I}{N}\qquad \text{and}\qquad \tilde{R} \mapsto \frac{R}{N}.$$
\end{rem}

The first two equations of (\ref{modelo}), $\dot{S}$ and $\dot{I}$, are independent of  $\dot{R}$. This is why we may reduce (\ref{modelo}) to:

\smallskip

\begin{equation}
\label{modelo2a}
\begin{array}{lcl}
\dot{x} = f_\gamma(x) \quad \Leftrightarrow \quad
\begin{cases}
&\dot{S} =  S(A-S) - \beta_\gamma(t) IS\\
\\
&\dot{I} =  \beta_\gamma(t)IS - (\mu+d) I  - \dfrac{r I}{a + I},
\end{cases}
\end{array}
\end{equation}

\smallskip

\noindent with $x=(S,I)$.

\begin{rem} \label{notacao_d}
 From now on, with the exception of Lemma \ref{compact1}, for the sake of simplicity,  the parameter $\mu$ encloses natural death rate $\mu$ and death rate  due to the disease $d$. In other terms:
$$
\mu +d \mapsto \mu.
$$
\end{rem}

\subsection{Autonomous case ($\gamma=0$)}
The vector field   $f_0(x)$ associated to \eqref{modelo2a} is autonomous, $C^\infty$ and defined on  $(\mathbb{R}_0^+)^2$.
For $\gamma=0$, the model (\ref{modelo2a}) may be recast into the form

\begin{equation}
\label{modelo2}
\begin{array}{lcl}
\dot{x} = f_0(x) \quad \Leftrightarrow \quad
\begin{cases}
&\dot{S} =  S(A-S) - \beta_0IS\\
\\
&\dot{I} =  \beta_0IS -  \mu I - \dfrac{r I}{a + I}.
\end{cases}
\end{array}
\end{equation}

\bigskip

In Lemma \ref{compact1}, we prove that the flow associated to \eqref{modelo2} may be defined in a compact set of $(\mathbb{R}_0^+)^2$, leading to a \emph{complete flow}, \emph{i.e.} solutions are defined for all $t \in \mathbb{R}^+$. The quantity $\mathcal{R}_0$  can be seen as the average number of infectious contacts of a single infected individual during the entire period they remain infectious \cite{Jones2007}.  According to \cite{ParkBolker2020, Li2011}, for model  (\ref{modelo}), this number may be explicitly computed as:
 
\begin{equation}
\label{R0}
\begin{array}{lcl}
\mathcal{R}_0 =\dpt  \lim_{T\rightarrow +\infty}\frac{1}{T} \, \int_0^T\,  \dfrac{A \beta(t)}{\mu + \frac{r}{a}} \, \, \mathrm{d}t \overset{(\gamma=0)}{=} \dfrac{\beta_0A}{\mu + \frac{r}{a}} \geq 0.
\end{array}
\end{equation}

\bigskip

\noindent  Our first main result shows the existence of a non-empty open subset   of $\Lambda$  such that the model \eqref{modelo2} has $\mathcal{R}_0<1$ and  exhibits two endemic equilibria.
\begin{maintheorem}
\label{th: mainA}
There is a non-empty open set \,$\mathcal{U}_1\subset \Lambda$ for which \eqref{modelo2} has $\mathcal{R}_0<1$ and the flow exhibits two endemic equilibria, a sink and a saddle.
\end{maintheorem}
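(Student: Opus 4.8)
The plan is to locate all equilibria of \eqref{modelo2} in closed form, classify them via the linearization, and then carve out the open region of $\Lambda$ on which the count and the stability types come out as claimed. Imposing $\dot S=\dot I=0$ and discarding the solutions with $I=0$ (the origin and the disease-free equilibrium $(A,0)$), an endemic equilibrium must satisfy $S=A-\beta_0 I$ together with $\beta_0(A-\beta_0 I)-\mu-\tfrac{r}{a+I}=0$; clearing the denominator turns the latter into
$$
p(I):=\beta_0^{2}I^{2}-B\,I+C=0,\qquad B:=\beta_0 A-\mu-\beta_0^{2}a,\qquad C:=a(\mu-\beta_0 A)+r .
$$
From \eqref{R0} one reads off $\mathcal{R}_0<1\iff\beta_0 A\,a<\mu a+r\iff C>0$. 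By the elementary analysis of a quadratic with positive leading coefficient, $p$ has two distinct positive roots $I_1<I_2$ exactly when $C>0$, $B>0$ and $B^{2}>4\beta_0^{2}C$; and since $p(A/\beta_0)=\mu A/\beta_0+\beta_0 aA+C>0$ while the vertex $B/(2\beta_0^{2})$ of $p$ lies to the left of $A/\beta_0$ (because $B<2\beta_0 A$ always holds), both roots are $<A/\beta_0$, so $S=A-\beta_0 I>0$ at each. This is precisely a backward bifurcation at $\mathcal{R}_0=1$. Each of those three inequalities is strict and involves only smooth functions of $(A,r,\beta_0,a,\mu+d)$, so they define an open subset of $\Lambda$, which is non-empty: e.g. $\beta_0=1$, $A=3$, $a=9/10$, $r=2$, $\mu+d=1$ (split $\mu,d>0$ arbitrarily) gives $C=\tfrac15$, $B=\tfrac{11}{10}$, $B^{2}/4=\tfrac{121}{400}$ and $\mathcal{R}_0=\tfrac{27}{29}<1$.

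Next I would linearize. Using $A-S^{*}-\beta_0 I^{*}=0$ and $\beta_0 S^{*}-\mu-\tfrac{r}{a+I^{*}}=0$, the Jacobian of $f_0$ at an endemic point $(S^{*},I^{*})$ collapses to
$$
Df_0(S^{*},I^{*})=\begin{pmatrix}-S^{*}&-\beta_0 S^{*}\\[3pt]\beta_0 I^{*}&\dfrac{rI^{*}}{(a+I^{*})^{2}}\end{pmatrix},
$$
hence $\det Df_0(S^{*},I^{*})=S^{*}I^{*}\bigl(\beta_0^{2}-\tfrac{r}{(a+I^{*})^{2}}\bigr)$ and $\mathrm{tr}\,Df_0(S^{*},I^{*})=-S^{*}+\tfrac{rI^{*}}{(a+I^{*})^{2}}$. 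With $g(I):=\beta_0(A-\beta_0 I)-\mu-\tfrac{r}{a+I}=-p(I)/(a+I)$ one computes $\beta_0^{2}-\tfrac{r}{(a+I^{*})^{2}}=-g'(I^{*})=p'(I^{*})/(a+I^{*})$ whenever $p(I^{*})=0$, so $\det Df_0(S^{*},I^{*})$ has the sign of $p'(I^{*})$. Since $p$ is an upward parabola, $p'(I_1)<0<p'(I_2)$; therefore the smaller endemic equilibrium always has negative determinant and is a saddle, while the larger one has positive determinant.

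It then remains to make the larger equilibrium a sink, i.e. to impose $\mathrm{tr}\,Df_0<0$ at $I_2$, which reads $\tfrac{rI_2}{(a+I_2)^{2}}<A-\beta_0 I_2$. This is one further strict, smooth inequality, and at the sample parameters above it holds ($I_2\approx 0.87$, $S_2\approx 2.13$, $\tfrac{rI_2}{(a+I_2)^{2}}\approx 0.56<2.13$), so $\det Df_0>0$ and $\mathrm{tr}\,Df_0<0$ give a sink there, while $\det Df_0<0$ at $I_1$ gives the saddle. Intersecting this condition with the open set produced above yields the required non-empty open $\mathcal{U}_1\subset\Lambda$ on which $\mathcal{R}_0<1$ and \eqref{modelo2} has exactly two endemic equilibria, one a sink and one a saddle. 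The only genuinely delicate point is combinatorial — verifying that the three conditions for ``two positive roots'' are simultaneously compatible with the trace inequality — which is why I would fix an explicit witness point and then invoke openness of all these strict inequalities, rather than trying to describe $\mathcal{U}_1$ by inequalities alone.
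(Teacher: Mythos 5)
Your proposal is correct, and its skeleton coincides with the paper's: write the endemic equilibria as roots of a quadratic, use the simplified Jacobian at an equilibrium, classify by determinant and trace, and carve out an open parameter region by strict inequalities, so only the bookkeeping differs. The paper solves a quadratic in $S$ and phrases the existence condition as $\mathcal{R}_0>\phi_0$ (Lemma \ref{phi0}), whereas you work with the quadratic $p(I)$ in $I$; your observation that $\det Df_0$ at an endemic point has the sign of $p'(I^{*})$ packages in one line what Lemma \ref{sink} obtains through two separate chains of inequalities for $E_3$ and $E_4$, and it makes transparent that the larger-$S$ (smaller-$I$) equilibrium is always the saddle. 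For the sink, the paper isolates the clean sufficient condition $\beta_0<1$, which allows an explicit description of $\mathcal{U}_1$ by $a\beta_0<\sqrt{r}$, $\phi_0<\mathcal{R}_0<1$, $\beta_0<1$; you instead keep the trace inequality as one more strict, smooth condition and certify all conditions simultaneously at an explicit witness (with $\beta_0=1$, a point that actually lies outside the paper's $\mathcal{U}_1$, confirming that $\beta_0<1$ is merely sufficient). Your route buys an explicit, checkable proof of non-emptiness and openness, which the paper leaves somewhat implicit (it points to parameter values used in its figures), at the cost of a less explicit description of the open set; both arguments fully establish the statement, including positivity of both components of the equilibria, which you verify via $0<I<A/\beta_0$ rather than by invoking condition \textbf{(C2)} as the paper does.
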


\noindent The proof of Theorem \ref{th: mainA} is  presented in Subsection \ref{proof Th A}, by exhibiting an open set \,$\mathcal{U}_1\subset \Lambda$ where disease-free and endemic equilibria coexist. 
The sink undergoes a supercritical Hopf bifurcation giving rise to an attracting  periodic solution, which survives for $\mathcal{U}_2\subset \Lambda$. This is the purpose of the next result whose proof is performed in Subsection \ref{proof prop1}.

\begin{prop}
\label{prop1}
There is a non-empty open set \,$\mathcal{U}_2\subset \Lambda$ for which \eqref{modelo2} has $\mathcal{R}_0<1$ and the flow exhibits an attracting periodic solution.  
\end{prop}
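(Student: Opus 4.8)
The strategy is to obtain the attracting periodic solution of the statement as the product of a supercritical Poincar\'e--Andronov--Hopf bifurcation undergone by the endemic sink supplied by Theorem \ref{th: mainA}. I work throughout with the planar reduced system \eqref{modelo2}. In $\mathcal{U}_1$ the endemic sink $p^*=(S^*,I^*)$ is hyperbolic, so by the implicit function theorem it persists and depends smoothly on the parameters; moreover the equilibrium identities $A-S^*=\beta_0 I^*$ and $\beta_0 S^*=\mu+\frac{r}{a+I^*}$ reduce the Jacobian of $f_0$ at $p^*$ to
\[
Df_0(p^*)=\begin{pmatrix} -S^* & -\beta_0 S^* \\ \beta_0 I^* & \dfrac{rI^*}{(a+I^*)^2} \end{pmatrix},
\]
whose trace is $\tau:=-S^*+\frac{rI^*}{(a+I^*)^2}$ and whose determinant is $\delta:=S^*I^*\big(\beta_0^{2}-\frac{r}{(a+I^*)^2}\big)$. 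Along the sink branch $\delta>0$, while $\tau<0$ on $\mathcal{U}_1$.

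First I would locate a Hopf value. Parametrising the endemic branch by $I^*$, so that $S^*(I^*)=\beta_0^{-1}(\mu+\frac{r}{a+I^*})$ and $A=S^*(I^*)+\beta_0 I^*$ are slaved to it, the trace becomes an explicit function $\tau(I^*)$ that is negative both as $I^*\to 0^+$ and as $I^*\to+\infty$, whereas $\frac{rI^*}{(a+I^*)^2}$ attains the value $\frac{r}{4a}$ at $I^*=a$. Choosing $\beta_0>2$, $r$ of order $a^{2}\beta_0^{2}$ and $\mu$ small, one arranges $\tau>0$ near $I^*=a$ while keeping $\delta>0$ and $\mathcal{R}_0<1$ there (the compatibility of these requirements amounts to the non-empty window $2a^{2}\beta_0^{2}<r<4a^{2}\beta_0^{2}$ with $\mu$ small), so $\tau$ changes sign and vanishes at some parameter value $\lambda=\lambda_H$ with $\delta(\lambda_H)>0$. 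At $\lambda_H$ the matrix $Df_0(p^*)$ has purely imaginary eigenvalues $\pm i\omega_0$, $\omega_0=\sqrt{\delta(\lambda_H)}$, and the transversality condition $\frac{d}{d\lambda}\mathrm{Re}(\cdot)\big|_{\lambda_H}=\tfrac12\,\tau'(\lambda_H)\neq 0$ follows since $\tau(I^*)$ crosses zero transversally and $I^*$ depends nontrivially on the chosen parameter; every derivative involved is elementary.

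The decisive ingredient is the sign of the first Lyapunov coefficient $\ell_1$ at $\lambda_H$. Translating $p^*$ to the origin and conjugating the linear part to $\left(\begin{smallmatrix}0&-\omega_0\\ \omega_0&0\end{smallmatrix}\right)$, I would compute $\ell_1$ from the standard formula in terms of the second- and third-order partial derivatives of the transformed field; as the only non-polynomial term $\frac{rI}{a+I}$ has derivatives $-\frac{ra}{(a+I)^2}$, $\frac{2ra}{(a+I)^3}$, $-\frac{6ra}{(a+I)^4}$, the quantity $\ell_1$ is an explicit rational function of the parameters. Rather than simplify it in full generality, I would specialise to a convenient normalisation (for instance $a=1$ with $I^*$ fixed, the remaining parameters being then determined by $\tau=0$, $\delta>0$ and $\mathcal{R}_0<1$) and verify $\ell_1<0$ at that point; by continuity $\ell_1<0$ on a neighbourhood.

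Finally, on the open set of parameters where $\tau$ is small and positive, $\delta>0$, $\ell_1<0$ and $\mathcal{R}_0<1$, the Poincar\'e--Andronov--Hopf theorem provides a unique small-amplitude periodic orbit of \eqref{modelo2} that is asymptotically stable; being close to the positive equilibrium $p^*$, it lies in the invariant compact region of Lemma \ref{compact1}, hence in $(\mathbb{R}_0^+)^2$. Pulling this set back to $\Lambda$ yields the required non-empty open $\mathcal{U}_2$. I expect the computation of $\ell_1$ to be the only genuine obstacle: it is the single truly computational step, and the normalisation is what keeps its algebra manageable, whereas everything else --- persistence and hyperbolicity of $p^*$, the trace/determinant reduction, openness of the conditions $\mathcal{R}_0<1$ and $\delta>0$, and transversality --- is routine.
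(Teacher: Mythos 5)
Your route is essentially the paper's: Proposition \ref{prop1} is obtained there from Lemma \ref{Hopf_Bifurcation}, i.e.\ from a supercritical Hopf bifurcation of the endemic sink $E_3$, with $\mathcal{U}_2$ cut out by $\beta_0>1$, $\mu \leq A(\beta_0-1)+a\beta_0-2\sqrt{\beta_0(\beta_0-1)aA}$ and $\phi_0<\mathcal{R}_0<1$. Your trace and determinant at $p^*=E_3$ agree with \eqref{finaltraceE3} and \eqref{detE3}, and your parametrisation of the endemic branch by $I^*$ (with $A$ slaved) leading to the window $\beta_0>2$, $2a^2\beta_0^2<r<4a^2\beta_0^2$, $\mu$ small is a correct and in fact more explicit way of locating a trace-zero point with positive determinant than the paper's discriminant analysis of \eqref{TrE30}.

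The genuine gap sits exactly at the step you yourself flag as the ``only obstacle'': the sign of the first Lyapunov coefficient. You never compute $\ell_1$, not even at one normalised parameter point, and you also only assert (rather than verify) the transversality of the eigenvalue crossing. Without $\ell_1<0$ the bifurcation could be subcritical, the bifurcating cycle would then be repelling, and the conclusion ``attracting periodic solution'' would not follow; so, as written, the proposal does not close the proof. It is worth noting that the paper does not perform this computation either: the proof of Lemma \ref{Hopf_Bifurcation} explicitly replaces the nondegeneracy conditions by the numerically observed attracting cycle of Figure \ref{attr_per_sol}, so carrying out your $\ell_1$ computation would actually strengthen the published argument rather than merely reproduce it. A second, smaller, omission: you check $\tau>0$, $\delta>0$ and $\mathcal{R}_0<1$ at the test value $I^*=a$, but the Hopf value is a crossing $I^*_H\neq a$, and the attracting cycle only exists for parameters near $I^*_H$, so $\mathcal{R}_0<1$ must be verified \emph{there}. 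This is fixable within your window: $\mathcal{R}_0<1$ is equivalent to $I^*<\frac{r}{a\beta_0^2}-a$, and $r>2a^2\beta_0^2$ gives $a<\frac{r}{a\beta_0^2}-a$, so any crossing with $I^*_H\leq a$ on the $\delta>0$ branch (which exists when the trace is still negative at the saddle-node value $I^*=\frac{\sqrt{r}}{\beta_0}-a$, e.g.\ for $\beta_0$ slightly above $2$ and $r$ near $2a^2\beta_0^2$) satisfies it automatically --- but this verification is missing from your write-up.
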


\noindent The existence of an orientable stable periodic solution  (see Figure \ref{attr_per_sol}) coming from a supercritical Hopf bifurcation, prompts the existence of a \emph{strange attractor} for $f_\gamma$, with $\gamma>0$. The formal statement of this result is the goal of next subsection.
\medbreak
\medbreak
\subsection{The non-autonomous case ($\gamma>0$)}
Many aspects contribute to the richness and complexity of a dynamical system. One of them is the existence of strange attractors (observable chaos). Before going further, we introduce the following notion:

\begin{defn}[\cite{Rodrigues2020}, adapted]
A  (H\'enon-type) \emph{strange attractor} of a two-dimensional dissipative diffeomorphism, defined on a Riemannian manifold, is a compact invariant set $\Omega$ with the following properties:

\smallskip

\begin{enumerate}
\item the set $\Omega$  equals the topological closure of the unstable manifold of a hyperbolic periodic point;
\medskip
\item the basin of attraction of $\Omega$  contains a non-empty open set ($\Rightarrow$ it has positive Lebesgue measure);
\medskip
\item there is a dense orbit in $\Omega$ with a positive Lyapunov exponent.
\end{enumerate}

\bigskip

\noindent A vector field possesses a \emph{strange attractor }if the first return map to a cross section does.  
\end{defn}

The next result  is about a mechanism for producing chaos which may be applied   to some tamed dynamical settings, such as limit cycles and singularities undergoing supercritical Hopf bifurcations. It proves the appearance of sustainable chaotic behavior under reasonable conditions.

\begin{maintheorem}
\label{th: mainB}
For \,$\mathcal{U}_2\subset \Lambda$ of Proposition \ref{prop1} and  for $\omega$ sufficiently large ($\omega\gg 1$),  the following inequality holds for system \eqref{modelo2a}:
\begin{equation}
\label{abundance}
\liminf_{\varepsilon\rightarrow 0^+}\, \,  \frac{ \emph{Leb} \left\{\gamma \in [0,\varepsilon]: {f}_\gamma  \text{  exhibits a strange attractor}\right\}}{\varepsilon}  >0,
\end{equation}

\medskip

\noindent where \emph{Leb} denotes the one-dimensional Lebesgue measure.
\end{maintheorem}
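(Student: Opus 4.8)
The argument will use the theory of rank-one (H\'enon-type) attractors of Wang and Young, in the form adapted to periodically forced limit cycles by Wang and Ott and employed in \cite{Rodrigues2020}: near a hyperbolic attracting limit cycle with nonzero shear, a time-periodic perturbation of sufficiently high frequency creates, for a positive-measure set of amplitudes, a rank-one strange attractor. The plan is (i) to recast the non-autonomous planar system \eqref{modelo2a} as an autonomous flow on an extended phase space and pass to a first-return map; (ii) to use Proposition~\ref{prop1} to produce, at $\gamma=0$, an attracting invariant circle -- the ghost of a two-dimensional torus -- for that map; and (iii) to verify, for $\omega\gg 1$, the hypotheses of the abstract rank-one theorem, which delivers exactly an estimate of the form \eqref{abundance}. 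Concretely: since $\Phi$ is $T$-periodic, $\beta_\gamma$ has period $\tau=T/\omega$, and \eqref{modelo2a} is equivalent to the autonomous field $\dot x=f_\gamma(x,\theta)$, $\dot\theta=1$ on $\mathcal{M}=(\mathbb{R}_0^+)^2\times(\mathbb{R}/\tau\mathbb{Z})$; the section $\Sigma=(\mathbb{R}_0^+)^2\times\{0\}$ is global and transverse, and the first-return map $\mathcal{P}_\gamma\colon\Sigma\to\Sigma$ is the time-$\tau$ map of \eqref{modelo2a}. By the convention that a vector field inherits a strange attractor from its first-return map, it suffices to exhibit one for $\mathcal{P}_\gamma$.

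\textbf{The ghost torus.} For $\gamma=0$ one has $\mathcal{P}_0=\varphi_0(\tau,\cdot)$, and by Proposition~\ref{prop1} the autonomous flow \eqref{modelo2} has, for parameters in $\mathcal{U}_2$, a hyperbolic attracting periodic solution $\Gamma$. Hence $\Gamma$ is a normally hyperbolic attracting invariant circle of $\mathcal{P}_0$ -- strong transverse contraction, rigid rotation along $\Gamma$ -- and $\mathcal{T}_0=\Gamma\times(\mathbb{R}/\tau\mathbb{Z})$ is the attracting invariant two-torus of the extended flow whose ghost is to be shadowed. By normal hyperbolicity, for all small $\gamma$ the map $\mathcal{P}_\gamma$ still has an invariant closed curve $\Gamma_\gamma$ near $\Gamma$ with a uniformly dominating transverse contraction, and annular coordinates around $\Gamma$ bring $\mathcal{P}_\gamma$ into the normal form required by the rank-one theory.

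\textbf{Verification of the hypotheses and conclusion.} Three conditions must then be checked for the family $\{\mathcal{P}_\gamma\}_{\gamma\in[0,\varepsilon]}$. \emph{Regularity and singular limit}: $\mathcal{F}_\gamma$ is $C^3$ in $(x,\gamma)$ by (C1)--(C3), so $\{\mathcal{P}_\gamma\}$ is a $C^3$ family, and collapsing the transverse coordinate yields a one-parameter family of $C^3$ circle maps $\mathcal{P}_\gamma^\infty$, the singular limit. \emph{Orientability}: as observed in the paragraph preceding the statement, $\Gamma$ is an orientable stable periodic solution, and \eqref{modelo2a} is planar for each frozen $\theta$, so $\mathcal{P}_\gamma$ is orientation preserving on the annulus around $\Gamma_\gamma$ -- the setting in which the planar rank-one theorems apply. \emph{Twist and nondegenerate folding} (the decisive point): since $\Gamma$ is born from a supercritical Hopf bifurcation in the proof of Proposition~\ref{prop1}, the Hopf normal form of \eqref{modelo2} carries a nonzero frequency--amplitude coefficient, i.e.\ $\Gamma$ has nonzero shear; integrating the variational equation along $\Gamma$ against the perturbation $\gamma\beta_0\Phi(\omega t)$, and using that $\Phi$ has at least two nondegenerate critical points (hypothesis (C3)) together with $\omega\gg 1$ -- which keeps the per-step transverse contraction weak, so that the accumulated angular shear can fold $\Gamma_\gamma$ before the orbit relaxes onto it -- the maps $\mathcal{P}_\gamma^\infty$ acquire, for suitable $\gamma$, nondegenerate critical points whose forward orbits stay bounded away from the critical set (a Misiurewicz-type condition), and the family is transverse in $\gamma$ there. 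These are precisely the conditions of the rank-one theorem, which then yields $\varepsilon_0>0$ and $c>0$ such that $\mathrm{Leb}\{\gamma\in[0,\varepsilon]:\mathcal{P}_\gamma\text{ has a H\'enon-type strange attractor}\}\ge c\,\varepsilon$ for all $\varepsilon\in(0,\varepsilon_0)$; each such attractor $\Omega_\gamma$ satisfies the three properties in the definition of strange attractor, so $f_\gamma$ exhibits a strange attractor for all these $\gamma$, which is \eqref{abundance}. That $\Omega_\gamma$ is not contained in a tubular neighbourhood of $\Gamma_\gamma$ and shadows $\mathcal{T}_0$ follows from the standard geometric description of rank-one attractors once folding has developed.

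\textbf{Main obstacle.} The hard part is the ``twist and nondegenerate folding'' step: proving that some singular-limit circle map $\mathcal{P}_\gamma^\infty$ genuinely folds (Misiurewicz-type nondegeneracy) and that the family is parameter-transverse in $\gamma$. This requires quantitative control of the angular component of the return map -- obtained by integrating the variational equation along $\Gamma$ against $\Phi(\omega\,\cdot)$ -- together with a careful tracking of how the nonzero twist of the Hopf-born cycle interacts with the high-frequency forcing; this is exactly where the hypotheses ``$\omega\gg 1$'' and ``$\Phi$ has two nondegenerate critical points'' are consumed, and it also requires confirming, from the normal-form computation underlying Proposition~\ref{prop1}, that the relevant twist coefficient does not vanish. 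The remaining ingredients -- persistence of the invariant circle, orientability, and uniformity of the constant $c$ as $\varepsilon\to 0^+$ -- are standard consequences of normal hyperbolicity and of the abstract theorem.
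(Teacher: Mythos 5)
Your proposal takes essentially the same route as the paper: the paper's proof likewise consists of applying the Wang--Young theory of rank-one attractors for periodically-kicked limit cycles \cite[Theorems 1 and 2]{WangYoung2003} to the attracting, orientable periodic solution $\mathcal{C}$ born in the supercritical Hopf bifurcation of Proposition \ref{prop1}, viewing $\beta_\gamma$ as the kick, using \textbf{(C3)} for the $C^3$ regularity and the two nondegenerate critical points of $\Phi$, taking $\omega\gg 1$, and invoking \cite[Section 3]{WangOtt2008} for the positive-measure set of $\gamma$, while your extended-system/first-return-map and torus-breakdown picture matches Lemma \ref{new_lema} and the paper's geometric discussion. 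If anything, you spell out the singular-limit, shear and Misiurewicz-type transversality hypotheses in more detail than the paper, which delegates their verification to the cited theorems.
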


\noindent This result implies that  strange attractors are \emph{abundant} (near $\gamma=0$) for the one-parameter family $\mathcal{F}_\gamma$ associated to the modified SIR model \eqref{modelo} {in the terminology of \cite{MoraViana1993}.}  
The proof of Theorem \ref{th: mainB} is  presented in Section \ref{SA_lab}. Although the proof is highly specialized, its  consequences are  discussed in Section \ref{discussion}. 
Our technique may be applied to all models displaying Bogdanov-Takens bifurcations \cite{Yagasaki2002} with a supercritical Hopf bifurcation line. 

Throughout this paper, we have endeavoured to make a self contained exposition bringing together all topics related to the proofs. We have drawn illustrative figures to make the paper easily readable.

\section{The isolating compact set}
\label{compact}

In this section, we are going to consider system  \eqref{modelo} subject to the condition $\gamma=0$.

\begin{defn} 
We say that $\mathcal{K} \subset (\RR^+_0)^3$ is a \emph{positively flow-invariant set} for \eqref{modelo} if for all $X \in \mathcal{K}$ the trajectory of $\varphi(t, X)$ is fully contained in $\mathcal{K}$ for $t \geq 0$.

\end{defn}

\begin{lem}
\label{compact1}

The region defined by:

$$
\mathcal{M} = \left\{ (S,I,R) \in  (\mathbb{R}_0^+): \quad  0 \leq S \leq A, \quad 0 \leq S+I+R \leq \dfrac{A (\mu + A)}{\mu}, \quad I, R\geq 0 \right\},
$$

\bigskip

\noindent is positively flow-invariant for model (\ref{modelo}) with $\gamma=0$.
\end{lem}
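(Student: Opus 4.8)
The plan is to show that the vector field $\mathcal{F}_0$ points inward (or is tangent) along the boundary of $\mathcal{M}$, so that no trajectory starting in $\mathcal{M}$ can escape. The set $\mathcal{M}$ is cut out by the inequalities $S\geq 0$, $I\geq 0$, $R\geq 0$, $S\leq A$ and $S+I+R\leq A(\mu+A)/\mu$, so I would check each of the corresponding faces in turn.

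First, I would treat the coordinate hyperplanes. On $\{S=0\}$ the first equation of \eqref{modelo} gives $\dot S = 0$, so $\{S=0\}$ is invariant and trajectories cannot cross it (equivalently, $S\geq 0$ is preserved); similarly on $\{I=0\}$ we get $\dot I = 0$ and on $\{R=0\}$ we get $\dot R = rI/(a+I)\geq 0$ since $I\geq 0$. Hence the three nonnegativity constraints are maintained. Next, on the face $\{S=A\}$ (with $I\geq 0$) the first equation gives $\dot S = A(A-A) - \beta_0 I A = -\beta_0 I A \leq 0$, so the flow does not increase $S$ beyond $A$; this is exactly condition \textbf{(C2)} being consistent with the dynamics.

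The main step is the ``total population'' face. Set $N = S+I+R$ and compute, using $\gamma=0$ and adding the three equations of \eqref{modelo},
\begin{equation*}
\dot N = S(A-S) - \beta_0 I S + \beta_0 I S - \mu I - dI - \frac{rI}{a+I} + \frac{rI}{a+I} - \mu R = S(A-S) - (\mu+d) I - \mu R.
\end{equation*}
Since $0\leq S\leq A$ we have $S(A-S)\leq A^2$ (indeed $S(A-S)\leq A\cdot A$ on this range, and more crudely $\leq A(\mu+A)$ after adding $\mu S\geq 0$), and dropping the nonnegative term $dI$ gives
\begin{equation*}
\dot N \leq S(A-S) - \mu(I+R) = S(A-S) + \mu S - \mu N \leq A(A-A) + \mu A - \mu N = \mu A - \mu N,
\end{equation*}
wait — more carefully, $S(A-S)+\mu S = S(A+\mu-S)$ is maximized over $S\in[0,A]$ at $S=A$ (the vertex of the downward parabola is at $S=(A+\mu)/2$, which may exceed $A$, so on $[0,A]$ the max is at the endpoint $S=A$ when $(A+\mu)/2\geq A$, i.e. always when $\mu\geq A$; in general one bounds $S(A+\mu-S)\leq A(A+\mu-A)=\mu A$ only if... ) — so the clean bound to push is $\dot N\leq A(\mu+A) - \mu N$, whence on the face $N = A(\mu+A)/\mu$ one gets $\dot N\leq A(\mu+A) - \mu\cdot A(\mu+A)/\mu = 0$. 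Therefore the outward normal derivative of $N$ is nonpositive on that face, and $\mathcal{M}$ is positively invariant.

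The only genuinely delicate point is getting the parabola estimate $S(A-S)+\mu S\leq A(\mu+A)$ for all $S\in[0,A]$ clean and tight enough to match the stated bound $A(\mu+A)/\mu$; I expect this reduces to the elementary inequality $S(A+\mu-S)\leq A(A+\mu)$ on $[0,A]$, which is immediate since on this interval $A+\mu-S\leq A+\mu$ and $S\leq A$. With the three coordinate faces, the face $\{S=A\}$, and this $N$-face all handled, Bony–Brezis (or a direct Gronwall argument on $N$) yields that every trajectory with initial data in $\mathcal{M}$ stays in $\mathcal{M}$ for all $t\geq 0$, which is the assertion. Care is also needed at the edges and corners where two faces meet, but there the inward-pointing conclusions from the adjacent faces are simultaneously valid, so no extra work is required.
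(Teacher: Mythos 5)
Your argument is correct, and it finishes the job by a different route than the paper. The core estimate is the same in both: adding the three equations gives $\dot N = S(A-S)-(\mu+d)I-\mu R$, and after discarding $dI$ and using $0\leq S\leq A$ one arrives at the linear differential inequality $\dot N \leq A(\mu+A)-\mu N$. Where you differ is in how this is converted into the conclusion. The paper proceeds by an integrating factor ($e^{\mu t}$) and solves the comparison equation, concluding that $N(t)$ stays bounded with $\lim_{t\to\infty}N(t)\leq A(\mu+A)/\mu$; it treats the nonnegativity of $S,I,R$ and the bound $S\leq A$ rather informally (``it is easy to check'' and an appeal to \textbf{(C2)} via the logistic comparison). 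You instead run a boundary-tangency (Nagumo/Bony--Brezis) argument on the polyhedron $\mathcal{M}$: the coordinate faces $\{S=0\}$, $\{I=0\}$, $\{R=0\}$, the face $\{S=A\}$, and the face $\{N=A(\mu+A)/\mu\}$, checking on each that the vector field does not point outward, with the elementary estimate $S(A+\mu-S)\leq A(A+\mu)$ on $[0,A]$ doing the work on the last face, and the corner issue correctly dismissed because every face condition is derived using only membership in $\mathcal{M}$. Your version is in fact closer to what the lemma literally asserts: it yields positive invariance of the specific set $\mathcal{M}$ directly (equivalently, one could keep the paper's integrating factor and observe that $N(0)\leq A(\mu+A)/\mu$ forces $N(t)\leq A(\mu+A)/\mu$ for all $t\geq 0$), whereas the paper's computation as written only delivers boundedness/an asymptotic bound; conversely, the paper's Gronwall-type computation has the small advantage of giving the explicit exponential relaxation of $N$ toward the threshold. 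The meandering aside about where $S(A+\mu-S)$ is maximized should be deleted in a final write-up, since the crude bound you settle on is all that is needed.
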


\begin{proof}
It is easy to check that $(\mathbb{R}_0^+)^3$ is flow invariant. We show that if $(S_0, I_0, R_0)\in (\mathbb{R}_0^+)^3$, then $\varphi_0(t, (S_0, I_0, R_0))$, $t\in \RR_0^+$, is contained in $ \mathcal{M}$.
Let us define  $N(t) = S(t) + I(t) + R(t)$ associated to the trajectory $\varphi_0(t, (S_0, I_0, R_0))$.  Using the components of \eqref{modelo} with $\gamma=0$, one knows that:

\smallskip

\begin{equation}
\label{proof1}
\begin{array}{lcl}
\dot{N} & = & \dot{S} + \dot{I} + \dot{R} \\
\\
& = & S(A-S) - \beta_0IS + \beta_0IS - \mu I - dI - \dfrac{r I}{a + I} + \dfrac{r I}{a + I} - \mu R \nonumber \\
\\
& = & S(A-S) - \mu I - dI - \mu R ,
\end{array}
\end{equation}

\bigskip

\noindent from where we deduce that:

\smallskip

\begin{equation}
\label{proof2}
\begin{array}{lcl}
\dot{N} + \mu N & = & S(A-S) - \mu I - dI - \mu R + \mu S + \mu I + \mu R \nonumber \\
\\
& = & SA - S^2 - dI + \mu S  \\ \\
& \leq  &   (\mu + A) S.
\end{array}
\end{equation}

\bigskip

\noindent If $\beta_0=0$, then the first component of \eqref{modelo} would be the logistic growth and thus its solution is limited by $A$ (by \textbf{(C2)}), a property which remains for $\beta_0>0$. In particular, we may conclude that 

\begin{equation*}
\label{proof3}
\begin{array}{lcl}
\dot{N} + \mu N  \leq (\mu + A) A.
\end{array}
\end{equation*}

\bigskip

Multiplying the integrant factor  $a(t)>0$ in   both sides, we obtain

\begin{equation*}
\label{proof4}
\begin{array}{lcl}
\dot{N} a(t) + \mu N a(t) \leq (\mu + A) A a(t) ,
\end{array}
\end{equation*}

\bigskip

\noindent where it is assumed that $\mu a(t) = \dot{a}(t)$, resulting in $a(t) = C_1 e^{\mu t}$, with $C_1>0$. It follows straightforwardly that:

\begin{equation*}
\label{proof5}
\begin{array}{lcl}
&&\dfrac{\mathrm{d}}{\mathrm{d}t} \Big[ N(t) C_1 e^{\mu t} \Big] = (\mu + A) A C_1 e^{\mu t} \\
\\
\Leftrightarrow&& N(t) C_1 e^{\mu t}  =  \dfrac{(\mu + A)A C_1 e^{\mu t}}{\mu} + C_2, \quad C_2 \in \mathbb{R} \\
\\
\Leftrightarrow&& N(t)  =  \dfrac{(\mu + A)A}{\mu} + C_3 e^{-\mu t}, \qquad \text{for} \quad C_3 = \dfrac{C_2}{C_1}.
\end{array}
\end{equation*}

\bigskip

\noindent Since $\dpt \lim_{t\rightarrow +\infty} N(t)=\dfrac{(\mu + A)A}{\mu} $, it is then proved that $N(t)$ is bounded and consequently all the solutions of the model (\ref{modelo}) are equally bounded.
\end{proof}
As pointed out before, from now on, the parameter $\mu$ denotes the natural death rate   and the death rate  due to the disease.  

\section{Disease-free equilibria and stability}
\label{DFE_lab}

In this section, we compute the disease-free equilibria of  (\ref{modelo2}) and their Lyapunov stability.  The model (\ref{modelo2}) has two disease-free equilibria: $$E_1= (0,0) \qquad \text{and} \qquad E_2=(A,0).$$ These equilibria are those where there is no infection present in the population. The jacobian matrix of the vector field (\ref{modelo2}) at a general point $E = (S, I) \in (\mathbb{R}_0^+)^2$ is given by:
 
\begin{equation}
\label{jacob}
\begin{array}{lcl}
J(E)=\left(\begin{array}{cc}
-\beta_0I + A - 2S & -\beta_0S \\ 
\\
\beta_0I & \beta_0S-\mu-\dfrac{ra}{(a+I)^2}
\end{array}\right)
\end{array}.
\end{equation}

\bigskip

\noindent At the disease-free equilibria, $E_1$ and $E_2$, the matrix \eqref{jacob} takes the forms:

\bigskip

\begin{center}
$
J(E_1)=\left(\begin{array}{cc}
A  & 0 \\ 
\\
0 & -\mu-\dfrac{r}{a}
\end{array}\right)$ \quad \text{and}\quad
$
J(E_2)=\left(\begin{array}{cc}
-A  & -\beta_0A \\ 
\\
0 & \beta_0A-\mu-\dfrac{r}{a}
\end{array}\right) .
$
\end{center}

\pgfplotsset{compat = newest}
\usetikzlibrary{decorations.markings}

\begin{figure}[!]
\center
\includegraphics[scale=1]{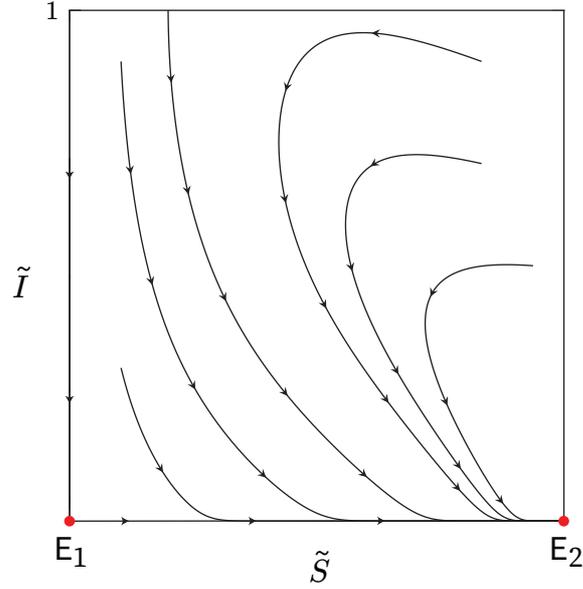}
\caption{\small Phase portrait of  \eqref{modelo2} with $\tilde{A}=0.96$, $\tilde{a}=0.02$, $\tilde{r}=0.25$, $\tilde{\beta_0} = 0.8$ and $\tilde{\mu} = 0.2$. Different trajectories associated to strategic initial conditions have been plotted, as well as the disease-free equilibria $E_1 = (0,0)$ and $E_2 = (A,0)$. Arrows indicate the direction of the flow according to time.}
\label{DFEpoint}
\end{figure}

\begin{lem} \label{DFE_stable}
With respect to system \eqref{modelo2},   $E_1$ is a saddle and $E_2$  is a sink  if and only if $\mathcal{R}_0 < 1$.
\end{lem}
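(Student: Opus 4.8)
The plan is simply to read the eigenvalues off the two Jacobian matrices $J(E_1)$ and $J(E_2)$ displayed above, both of which are triangular, and then to match the resulting sign condition for $E_2$ with the closed form of $\mathcal{R}_0$ in \eqref{R0}.

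First, at $E_1=(0,0)$ the matrix $J(E_1)$ is diagonal with diagonal entries $A$ and $-\mu-\frac{r}{a}$. Since all parameters in $\Lambda$ are strictly positive (in particular $A>0$ and $\mu+\frac{r}{a}>0$, recalling \textbf{(C1)} and Remark \ref{notacao_d}), one eigenvalue is strictly positive and the other strictly negative. Hence $E_1$ is a hyperbolic saddle, with no condition on $\mathcal{R}_0$ needed. Next, at $E_2=(A,0)$ the matrix $J(E_2)$ is upper triangular, so its eigenvalues are the diagonal entries
$$
\lambda_1=-A<0, \qquad \lambda_2=\beta_0 A-\mu-\frac{r}{a}.
$$
As $\lambda_1<0$ always, $E_2$ is a sink (and in fact a stable node, since both eigenvalues are real) if and only if $\lambda_2<0$, i.e. $\beta_0 A<\mu+\frac{r}{a}$, equivalently $\dfrac{\beta_0 A}{\mu+r/a}<1$. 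By \eqref{R0} evaluated at $\gamma=0$ this is precisely $\mathcal{R}_0<1$. Conversely, if $\mathcal{R}_0>1$ then $\lambda_2>0$ and $E_2$ is a saddle, while $\mathcal{R}_0=1$ is the non-hyperbolic threshold; this yields the asserted equivalence.

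There is essentially no genuine obstacle here: because both Jacobians are triangular, no trace/determinant analysis or eigenvalue estimate is required, and hyperbolicity away from $\mathcal{R}_0=1$ is immediate. The only points deserving care are (i) invoking the strict positivity of the parameters on $\Lambda$ to conclude that $E_1$ is a bona fide saddle rather than a degenerate equilibrium, and (ii) correctly identifying the algebraic inequality $\beta_0 A<\mu+\frac{r}{a}$ with $\mathcal{R}_0<1$ via \eqref{R0}, keeping track of the notational reduction $\mu+d\mapsto\mu$ introduced in Remark \ref{notacao_d}.
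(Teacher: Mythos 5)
Your proof is correct and follows essentially the same route as the paper's: both read the eigenvalues directly from the triangular matrices $J(E_1)$ and $J(E_2)$, conclude $E_1$ is a saddle from the opposite-sign real eigenvalues, and identify the sink condition $\beta_0 A-\mu-\frac{r}{a}<0$ with $\mathcal{R}_0<1$ via \eqref{R0}. Your added remarks on hyperbolicity, the case $\mathcal{R}_0>1$, and the notational reduction of Remark \ref{notacao_d} are fine but not needed beyond what the paper does.
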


\begin{proof}
The eigenvalues of $J(E_1)$ are $ A>0$ and $ -\mu - \frac{r}{a}<0$. As they are  real with  different signs,  then $E_1$ is a saddle \cite[pp.~4, 8--10]{GuckenheimerHolmes1983}. The eigenvalues of $J(E_2)$ are  $ -A<0$ and $  \beta_0A-\mu-\frac{r}{a}$. Since 

\begin{equation*}
\label{E2A0}
\begin{array}{rcr}
\beta_0A-\mu-\dfrac{r}{a} &<& 0  \quad \Leftrightarrow \quad \dfrac{\beta_0A}{\mu + \frac{r}{a}} \,\,\, \overset{(\ref{R0})}{=} \,\,\, \mathcal{R}_0 <   1 ,
\end{array}
\end{equation*}
\noindent the result follows \cite[pp.~4, 8--10]{GuckenheimerHolmes1983}.
\end{proof}
The two equilibria of Lemma \ref{DFE_stable} and the dynamics nearby have been drawn in Figure~\ref{DFEpoint}.

\section{Endemic equilibria}
\label{END_lab}

We compute the endemic equilibria of  \eqref{modelo2} and we analyze their stability as well as the bifurcations they undergo. This will be used to prove Theorem \ref{th: mainA} in Subsection \ref{proof Th A}. For the sake of completeness, we also perform a {\emph{sensitivity analysis}} of the parameters in Subsection \ref{PHI_0_analysis}. 
 
\subsection{Explicit expression}
In this section, we compute the endemic equilibria by founding non trivial zeros of $f_0$ (see \eqref{modelo2}):

\begin{equation}
\begin{array}{lcl}
\begin{cases}
& (A-S) - \beta_0I = 0\\
\\
&\beta_0S - \mu  - \dfrac{r }{a + I} = 0 
\end{cases} .
\end{array}
\end{equation}

In particular, we have:

$$
I= \dfrac{A - S}{\beta_0}>0 \qquad \text{and} \qquad \beta_0S - \mu - \dfrac{r}{a+\frac{A-S}{\beta_0}}=0
$$

and therefore

\begin{equation}
\label{paraS2}
\begin{array}{lcr}
&&(\beta_0S - \mu)\left(a + \dfrac{A-S}{\beta_0}\right) - r = 0 \\
\\
\Leftrightarrow &&a\beta_0S + S(A-S) - \mu a - \dfrac{\mu}{\beta_0}(A-S) - r = 0 \\
\\
\Leftrightarrow &&a\beta_0S + SA - S^2 - \mu a - \dfrac{\mu}{\beta_0}A + \dfrac{\mu}{\beta_0}S - r = 0 \nonumber\\
\\
\Leftrightarrow &&S^2 - \left[ a\beta_0 + A + \dfrac{\mu}{\beta_0} \right]S + \left[ \dfrac{\mu}{\beta_0}(a\beta_0 + A) + r \right] = 0. 
\end{array}
\end{equation}

\bigskip

\noindent The last equality is a quadratic polynomial in $S$. Hence, \eqref{modelo2} has  two endemic equilibria  if and only if $\Delta>0$, where:

\begin{eqnarray}
\label{Delta}
\nonumber \Delta &=& \left[ a\beta_0 + A + \dfrac{\mu}{\beta_0} \right]^2 - 4\left[ \dfrac{\mu}{\beta_0} (a\beta_0 + A) + r \right] \\ 
\nonumber \\
\nonumber &=& (a\beta_0 + A )^2 + 2(a\beta_0 + A)\dfrac{\mu}{\beta_0} + \left(\dfrac{\mu}{\beta_0}\right)^2 - 4 \left[ \dfrac{\mu}{\beta_0} (a\beta_0 + A) \right] - 4r \nonumber \\ 
\nonumber \\
&=& \left[a\beta_0 + A - \dfrac{\mu}{\beta_0} \right]^2 - 4r . 
\end{eqnarray}

\noindent The endemic equilibria of model ($\ref{modelo2}$) are explicitly given by:

\begin{equation}
\label{eq_endemicos}
\begin{array}{lcl}
E_3 & = & (S_3, I_3) \,\,\, = \,\,\, \left( \dfrac{a\beta_0 + A + \frac{\mu}{\beta_0} - \sqrt{\Delta}}{2} , \dfrac{A - S_3}{\beta_0}  \right) \\
\\
E_4 & = & (S_4, I_4) \,\,\, = \,\,\, \left( \dfrac{a\beta_0 + A + \frac{\mu}{\beta_0} + \sqrt{\Delta}}{2} , \dfrac{A - S_4}{\beta_0}  \right) ,
\end{array}
\end{equation}

\smallskip

\noindent  {where $S_3 < S_4<A$ (by {\bf (C2)}).}  

\begin{rem}
Using (\ref{R0}), the expression for $\Delta$ as a function of $\mathcal{R}_0$ may be written as:
\begin{equation}
\label{DeltaR0}
\begin{array}{lcl}
\Delta =  \left[ a\beta_0 + \mathcal{R}_0\left(\dfrac{\mu}{\beta_0} + \dfrac{r}{a\beta_0} \right) - \dfrac{\mu}{\beta_0} \right]^2 - 4r.
\end{array}
\end{equation}
\end{rem}

\subsection{Preliminary result for the study of bifurcations} \label{phi0phi0phi0<1}
From now on, we settle  the following constants that will be used throughout this text:

\begin{equation}
\label{phi_0}
\begin{array}{lcl}
\phi_0 := 1 - \dfrac{(a\beta_0 - \sqrt{r})^2}{a\mu +r} \qquad \text{and} \qquad \phi_1:=\dfrac{a^2\beta_0^2 + a\mu}{a\mu + r}.
\end{array}
\end{equation}

\noindent It is easy to check that 

\begin{equation*}
\label{phi_0_1}
\begin{array}{lcl}
\phi_0 = 1 - \dfrac{a^2\beta_0^2 + {r}}{a\mu +r} + \dfrac{2a\beta_0\sqrt{r}}{a\mu +r} , \qquad \phi_0 \leq 1 \qquad \text{and} \qquad \phi_1 \geq 0.
\end{array}
\end{equation*}

\smallskip
\noindent The following result relates the existence of endemic  {equilibria} of  \eqref{modelo2} with $\mathcal{R}_0$.
\begin{lem}\label{phi0}
System \eqref{modelo2} has two endemic equilibria if \,$\mathcal{R}_0 > \phi_0$.
\end{lem}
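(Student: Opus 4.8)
The plan is to reduce the claim to the discriminant condition $\Delta>0$ already isolated in the derivation of \eqref{eq_endemicos}, and then to recognize the hypothesis $\mathcal{R}_0>\phi_0$ as exactly that condition. Recall from \eqref{paraS2}--\eqref{Delta} that the $S$-coordinates of the endemic equilibria are the roots of a monic quadratic whose sum $a\beta_0+A+\mu/\beta_0$ and product $\tfrac{\mu}{\beta_0}(a\beta_0+A)+r$ are both positive under \textbf{(C1)}, and whose discriminant is $\Delta=\bigl(a\beta_0+A-\tfrac{\mu}{\beta_0}\bigr)^2-4r$. Thus, once $\Delta>0$, there are two distinct positive roots $S_3<S_4$, and $I_j=(A-S_j)/\beta_0>0$ because $S_j<A$ by \textbf{(C2)} (cf.\ the remark after \eqref{eq_endemicos}); so \eqref{modelo2} genuinely has two endemic equilibria. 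It therefore suffices to show that $\mathcal{R}_0>\phi_0$ forces $a\beta_0+A-\tfrac{\mu}{\beta_0}>2\sqrt r$.

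To do this, I would use \eqref{R0} to write $A=\mathcal{R}_0(a\mu+r)/(a\beta_0)$ and multiply the target inequality by $a\beta_0>0$, which turns it into $a^2\beta_0^2-a\mu+\mathcal{R}_0(a\mu+r)>2a\beta_0\sqrt r$, i.e.
\begin{equation*}
\mathcal{R}_0(a\mu+r)>a\mu+2a\beta_0\sqrt r-a^2\beta_0^2 .
\end{equation*}
The key step is the completion of the square
\begin{equation*}
a\mu+2a\beta_0\sqrt r-a^2\beta_0^2=(a\mu+r)-(a\beta_0-\sqrt r)^2=(a\mu+r)\,\phi_0 ,
\end{equation*}
by the definition \eqref{phi_0} of $\phi_0$; dividing by $a\mu+r>0$ shows that $a\beta_0+A-\tfrac{\mu}{\beta_0}>2\sqrt r$ is equivalent to $\mathcal{R}_0>\phi_0$. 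Since under the hypothesis the quantity $a\beta_0+A-\mu/\beta_0$ is then positive and larger than $2\sqrt r$, squaring is legitimate and gives $\Delta>0$, completing the argument. One can cross-check this by running the same manipulation on the $\mathcal{R}_0$-form \eqref{DeltaR0} of $\Delta$, using $\mathcal{R}_0\bigl(\mu/\beta_0+r/(a\beta_0)\bigr)=A$.

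The whole thing is a short computation, so there is no real ``hard part''; the one place to be attentive is the square-completion identity that makes $\phi_0$ surface, and the observation that the hypothesis selects the branch $a\beta_0+A-\mu/\beta_0>2\sqrt r$ rather than $a\beta_0+A-\mu/\beta_0<-2\sqrt r$ of the inequality $\Delta>0$, so that squaring produces no sign ambiguity and no spurious case has to be ruled out.
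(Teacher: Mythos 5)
Your proof is correct and follows essentially the same route as the paper: both reduce the statement to the discriminant condition $\Delta>0$ from \eqref{Delta}--\eqref{DeltaR0} and surface $\phi_0$ through the same completion of the square behind \eqref{phi_0}. The only (welcome) organizational difference is that by arguing forward from $\mathcal{R}_0>\phi_0$ you only need the one implication and thus never have to dispose of the spurious branch $a\beta_0+A-\tfrac{\mu}{\beta_0}<-2\sqrt{r}$, which the paper handles separately via Remark \ref{rem_imp}, and you also make explicit the positivity of $S_3,S_4$ and $I_3,I_4$ that the paper leaves to \textbf{(C2)} and the formulas \eqref{eq_endemicos}.
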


\begin{proof}
We  know that  system (\ref{modelo2}) has two endemic equilibria if and only if $\Delta > 0$. Indeed, 

\begin{eqnarray}
\label{binomio0}
\nonumber && \Delta > 0 \\
\nonumber \\
\nonumber \overset{\eqref{DeltaR0}}{\Leftrightarrow} && \left[ a\beta_0 + \mathcal{R}_0\left(\dfrac{\mu}{\beta_0} + \dfrac{r}{a\beta_0} \right) - \dfrac{\mu}{\beta_0} \right]^2 - 4r > 0 \\
\nonumber \\
\nonumber \Leftrightarrow && a\beta_0 + \mathcal{R}_0 \left( \dfrac{\mu}{\beta_0} + \dfrac{r}{a\beta_0} \right) - \dfrac{\mu}{\beta_0} >  2\sqrt{r} \qquad \vee \qquad a\beta_0 + A - \dfrac{\mu}{\beta_0} < - 2\sqrt{r}  \\
\nonumber \\
\nonumber \Leftrightarrow && \mathcal{R}_0 > \dfrac{2a\beta_0\sqrt{r}}{a\mu + r} - \dfrac{a^2 \beta_0^2}{a\mu + r} + \dfrac{a \mu}{a \mu + r} \quad \,\,\, \vee \qquad a\beta_0 + A - \dfrac{\mu}{\beta_0} < - 2\sqrt{r} \\
\nonumber \\
\nonumber \Leftrightarrow && \mathcal{R}_0 > \dfrac{2a\beta_0\sqrt{r}}{a\mu + r} - \dfrac{a^2 \beta_0^2 + r}{a\mu + r} + 1 \,\,\,\,\,\,\, \qquad \vee \qquad a\beta_0 + A - \dfrac{\mu}{\beta_0} < - 2\sqrt{r} \\
\nonumber \\
\nonumber \Leftrightarrow && \mathcal{R}_0 > 1 - \dfrac{(a\beta_0 - \sqrt{r})^2}{a\mu + r} \qquad \qquad \qquad \,\, \vee \qquad a\beta_0 + A - \dfrac{\mu}{\beta_0} < - 2\sqrt{r} \\
\nonumber \\
\Leftrightarrow && \mathcal{R}_0 > \phi_0 \qquad \qquad \qquad \qquad \qquad \qquad \vee \qquad a\beta_0 + A - \dfrac{\mu}{\beta_0} < - 2\sqrt{r} .
\end{eqnarray}

\smallskip

\noindent Noticing that condition $a\beta_0 + A - \dfrac{\mu}{\beta_0} < - 2\sqrt{r}$ is never satisfied (\emph{cf.} Remark after the proof of Lemma \ref{phi0Positive}), the result follows.
\end{proof}

\begin{lem} \label{phi0Positive}
The following assertions are true:

\smallskip

\begin{enumerate}
\item The condition $a\beta_0 - 2\sqrt{r} < \dfrac{\mu}{\beta_0}$ is satisfied if and only if  $\phi_0 > 0$.\\
\item $ \mathcal{R}_0 >\phi_1$.\\
\item If \,$a\beta_0 < \sqrt{r}$, \,then \,$\phi_1 < 1$.\label{three} \\
\item The condition  $a\beta_0 < \sqrt{r}$ is satisfied if and only if \,$\phi_1 < \phi_0$. \label{four} 
\end{enumerate}
\end{lem}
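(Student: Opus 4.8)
The plan is to reduce each of the four assertions to an elementary algebraic inequality in the parameters. Apart from item (2), no dynamical information is needed: everything follows by clearing the positive denominator $a\mu+r$ and dividing through by positive parameters (recall $a,\beta_0,\mu,r>0$ since $\Lambda\subset(\mathbb{R}^+)^6$), operations which preserve the direction of every inequality, so that each claimed implication is in fact a chain of equivalences.

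For (1), I would start from $\phi_0>0 \iff a\mu+r>(a\beta_0-\sqrt r)^2 = a^2\beta_0^2-2a\beta_0\sqrt r+r$, cancel $r$ on both sides, divide by $a$ and then by $\beta_0$, arriving exactly at $a\beta_0-2\sqrt r<\mu/\beta_0$. For (3), I would simply observe $\phi_1<1 \iff a^2\beta_0^2+a\mu<a\mu+r \iff a^2\beta_0^2<r \iff a\beta_0<\sqrt r$, so the stated implication is actually an equivalence. For (4), clearing $a\mu+r$ turns $\phi_1<\phi_0$ into $a^2\beta_0^2+a\mu < a\mu+r-(a\beta_0-\sqrt r)^2 = a\mu-a^2\beta_0^2+2a\beta_0\sqrt r$, i.e. $2a^2\beta_0^2<2a\beta_0\sqrt r$, i.e. $a\beta_0<\sqrt r$ after dividing by $2a\beta_0>0$; again a genuine equivalence.

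Item (2) is the only one that is not a pure identity in the parameters. I would first rewrite $\mathcal{R}_0=\dfrac{\beta_0 A}{\mu+r/a}=\dfrac{a\beta_0 A}{a\mu+r}$, so that $\mathcal{R}_0-\phi_1=\dfrac{a\bigl(\beta_0 A-a\beta_0^2-\mu\bigr)}{a\mu+r}$; hence the claim is equivalent to $\beta_0 A>a\beta_0^2+\mu$, that is, $A>a\beta_0+\mu/\beta_0$. To obtain this I would invoke hypothesis \textbf{(C2)}: it forces the endemic equilibrium $E_4$ of \eqref{eq_endemicos} to satisfy $S_4<A$ (as already noted after \eqref{eq_endemicos}, since otherwise $I_4=(A-S_4)/\beta_0\le 0$ and $E_4$ would coincide with the disease-free equilibrium $E_2$, not an endemic one). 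As $S_4=\tfrac12\bigl(a\beta_0+A+\mu/\beta_0+\sqrt{\Delta}\bigr)\ge \tfrac12\bigl(a\beta_0+A+\mu/\beta_0\bigr)$, the inequality $S_4<A$ already yields $a\beta_0+\mu/\beta_0<A$, hence $\mathcal{R}_0>\phi_1$. As a byproduct one gets $a\beta_0+A-\mu/\beta_0>2a\beta_0>0>-2\sqrt r$, which is exactly the remark invoked in the proof of Lemma \ref{phi0}.

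The main obstacle here is conceptual rather than computational: items (1), (3), (4) are self-contained algebra, whereas (2) genuinely relies on the modelling hypothesis \textbf{(C2)} (equivalently, on the endemic equilibrium $E_4$ given by \eqref{eq_endemicos} lying in the phase space $(\mathbb{R}_0^+)^2$). I would therefore present (1), (3), (4) as bare chains of equivalences and keep the appeal to \textbf{(C2)} explicit only in (2).
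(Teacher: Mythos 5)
Your proposal is correct and follows essentially the same route as the paper: items (1), (3), (4) are handled by the same elementary manipulations of $\phi_0$ and $\phi_1$ over the positive denominator $a\mu+r$, and item (2) is obtained, exactly as in the paper, from the inequality $S_4<A$ (equivalently $a\beta_0+\mu/\beta_0<A$, via \textbf{(C2)} and positivity of $I_4$) rewritten in terms of $\mathcal{R}_0$. Your byproduct observation $a\beta_0+A-\mu/\beta_0>0$ is precisely the content of Remark \ref{rem_imp} used after Lemma \ref{phi0}.
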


\smallskip

\begin{proof}
\begin{enumerate}

\item Using (\ref{phi_0}), one may deduce that:

\begin{eqnarray*}
\label{phi_0_2}
\nonumber \phi_0 &=& 1- \dfrac{a^2\beta_0^2+r}{a\mu + r} + \dfrac{2a\beta_0\sqrt{r}}{a\mu + r} \,\,\, = \,\,\, \dfrac{a\mu+r-a^2\beta_0^2-r+2a\beta_0\sqrt{r} }{a\mu + r} = \,\,\,  \dfrac{2\beta_0\sqrt{r} - a\beta_0^2 + \mu}{\mu + \frac{r}{a}} .
\end{eqnarray*}

\smallskip

\noindent Since $\mu + r/a>0$, it follows immediately that:

\begin{equation*}
\label{phi0maior0}
\begin{array}{lcl}
&& \phi_0 > 0 \quad \Leftrightarrow \quad  2\beta_0\sqrt{r} - a \beta_0^2 + \mu > 0 \quad \Leftrightarrow \quad 2\sqrt{r} - a\beta_0 + \dfrac{\mu}{\beta_0} > 0 \quad \Leftrightarrow \quad a\beta_0 - 2\sqrt{r} < \dfrac{\mu}{\beta_0}.
\end{array}
\end{equation*}

 \bigskip
\item Since $ A > S_4 > S_3$ (by {\bf (C2)}), we have:

\begin{equation*}
\label{phi_1}
\begin{array}{lcl}
S_4 = \dfrac{a\beta_0 + A + \frac{\mu}{\beta_0} + \sqrt{\Delta}}{2} < A \quad \Leftrightarrow \quad a\beta_0 + \dfrac{\mu}{\beta_0} + \sqrt{\Delta} < A \quad \Leftrightarrow \quad a\beta_0 - A + \dfrac{\mu}{\beta_0} < - \sqrt{\Delta}.
\end{array}
\end{equation*}

\noindent In particular, we may conclude that:

\begin{eqnarray}
\label{phi_1_2}
& & a\beta_0 + \dfrac{\mu}{\beta_0} - A < 0 \\
\nonumber  \\
\nonumber \overset{(\ref{R0})}{\Leftrightarrow} && a\beta_0 + \dfrac{\mu}{\beta_0} - \dfrac{\mathcal{R}_0 (\mu + \frac{r}{a})}{\beta_0} < 0 \\
\nonumber \\
\nonumber \Leftrightarrow && \mathcal{R}_0 > \dfrac{a\beta_0^2 + \mu}{\mu + \frac{r}{a}} \,\,\, = \,\,\, \dfrac{a^2 \beta_0^2 + a \mu}{a \mu + r} ,
\end{eqnarray}

 \smallskip

\noindent which is equivalent to $\mathcal{R}_0 > \phi_1 > 0$ and the result follows directly. 

  \bigskip
\item One knows that:
\begin{eqnarray}
\label{phi_1_3}
\nonumber \phi_1 & = & \dfrac{a^2 \beta_0^2 + a \mu}{a \mu + r} \,\,\, = \,\,\, \dfrac{a^2 \beta_0^2 + a \mu + r - r}{a \mu + r} \,\,\, = \,\,\, 1 + \dfrac{a^2 \beta_0^2 - r}{a \mu + r} .
\end{eqnarray}

\noindent Since  $a^2\beta_0^2 - r < 0$ (by hypothesis one knows that $a\beta_0 < \sqrt{r}$), we have:

\begin{equation*}
\label{phi_1_4}
\begin{array}{rcl}
a^2 \beta_0^2 - r < 0 \quad \Leftrightarrow \quad a^2 \beta_0^2 < r \quad \Leftrightarrow \quad a\beta_0 < \sqrt{r} ,
\end{array}
\end{equation*}

\noindent and thus   $\phi_1 < 1$.

  \bigskip

\item The proof of this item is a consequence of the following chain of equivalences:
\begin{eqnarray}
\nonumber&& \phi_0 > \phi_1 \\
\nonumber \\
\nonumber &\overset{(\ref{binomio0}), (\ref{phi_1_3})}{\Leftrightarrow} & 1 - \dfrac{(a\beta_0 - \sqrt{r})^2}{a\mu + r} > 1 + \dfrac{a^2 \beta_0^2 - r}{a\mu + r} \\
\nonumber \\
\nonumber &\Leftrightarrow &  - (a\beta_0 - \sqrt{r})^2 > a^2 \beta_0^2 - r \\
\nonumber \\
\nonumber &\Leftrightarrow & - a^2 \beta_0^2 + 2a\beta_0\sqrt{r} - r > a^2 \beta_0^2 - r \\
\nonumber \\
\nonumber &\Leftrightarrow & 2a\beta_0\sqrt{r} > 2 a^2 \beta_0^2 \\
\nonumber \\
&\Leftrightarrow & a\beta_0 < \sqrt{r} . \label{phi_0_1_}
\end{eqnarray}
\end{enumerate}
\end{proof}

\subsection{Digestive remarks} \label{deg_rem}
\begin{rem}
\label{rem_4}
The number $\phi_0$ is the threshold above which we find saddle-node bifurcations and $\phi_1$ is the    $\mathcal{R}_0$-value above which  $S_3<S_4<A$.
\end{rem}
\begin{rem}
\label{rem_imp}
The right hand side inequality of    \eqref{binomio0} is impossible. Using \eqref{phi_1_2}, one knows that $A - \frac{\mu}{\beta_0} > a\beta_0 > 0$ and thus $a\beta_0 + (A - \frac{\mu}{\beta_0})>0$. This would contradict the second inequality of    \eqref{binomio0}.
\end{rem}

\subsection{Saddle-node bifurcation}
\label{ss:sn}
\noindent Let now consider the case where $\mathcal{R}_0 \geq  \phi_0$. If $\mathcal{R}_0 = \phi_0$, then two endemic equilibria are born under the condition $a\beta_0 < \sqrt{r}$, through a saddle-node bifurcation. We address the reader to \cite[pp.~146--149, 157]{GuckenheimerHolmes1983} for more information on the topic. 

\begin{lem}
\label{Lemma SN}
If $\mathcal{R}_0 = \phi_0$ and $\beta_0< 1$, then system (\ref{modelo2}) undergoes a saddle-node bifurcation.
\end{lem}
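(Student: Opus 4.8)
The plan is to verify the standard Sotomayor criteria for a saddle-node bifurcation (see \cite[pp.~146--149, 157]{GuckenheimerHolmes1983}), treating $\mathcal{R}_0$ (equivalently, one of the parameters in $\Lambda$, say $A$, with the others fixed) as the bifurcation parameter and noting that the bifurcation value is exactly $\mathcal{R}_0=\phi_0$. First I would recall, from \eqref{Delta}, that $\mathcal{R}_0 = \phi_0$ is equivalent to $\Delta = 0$, so at this parameter value the two endemic equilibria $E_3$ and $E_4$ of \eqref{eq_endemicos} coalesce into a single degenerate equilibrium
$$E^* = \left(\tfrac{a\beta_0 + A + \mu/\beta_0}{2},\ \tfrac{A - S^*}{\beta_0}\right), \qquad S^* = \tfrac{a\beta_0 + A + \mu/\beta_0}{2}.$$
Since $a\beta_0 < \sqrt r$ holds here (this is the condition under which, by Lemma \ref{phi0Positive}(\ref{four}), $\phi_1<\phi_0$, so that $E^*$ is a genuine endemic point with $S^* < A$), the equilibrium $E^*$ lies in the interior of the admissible region.

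The next step is the degeneracy of the linearization. I would compute $J(E^*)$ from \eqref{jacob} and show that $\det J(E^*) = 0$ while $\mathrm{tr}\, J(E^*) \neq 0$, so that $J(E^*)$ has a simple zero eigenvalue with one-dimensional right and left null eigenvectors $v$ and $w$. The vanishing of the determinant at $\Delta=0$ is forced algebraically: the product $\lambda_3\lambda_4$ of the nonzero-component part of the two endemic eigenvalues degenerates precisely when $S_3=S_4$; alternatively one substitutes $S=S^*$, $I=(A-S^*)/\beta_0$ into $\det J$ and uses the quadratic \eqref{paraS2} (which says $S^*$ is the double root) to see the determinant collapses. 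The hypothesis $\beta_0 < 1$ should enter here to guarantee $\mathrm{tr}\, J(E^*) \neq 0$ (so that the bifurcation is saddle-node rather than a higher-codimension Bogdanov--Takens point), and to pin down the signs so that on one side of $\phi_0$ one equilibrium is a saddle and the other a node, consistent with Theorem \ref{th: mainA}.

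Finally I would check the two transversality (nondegeneracy) conditions of the saddle-node theorem: (i) $w \cdot \partial_\mu f_0(E^*) \neq 0$, where $\mu$ here denotes the chosen bifurcation parameter — this is the transversal crossing of the eigenvalue through zero and follows from differentiating $\Delta$ with respect to that parameter and noting $\Delta$ has a simple zero at $\phi_0$; and (ii) $w \cdot \big[D^2 f_0(E^*)(v,v)\big] \neq 0$ — the quadratic-fold condition, which here reduces to checking that the Hessian of the second component of $f_0$ (the only nonlinearity in $I$ being $-rI/(a+I)$, with second derivative $2ra/(a+I)^3 \neq 0$, together with the quadratic term $-S^2$ in the first component) pairs nontrivially with $v$ and $w$. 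The main obstacle is the bookkeeping in step (ii): one must write $v$ and $w$ explicitly in terms of $A, a, r, \beta_0, \mu$, evaluate the two nonzero second derivatives, and confirm the resulting scalar is nonzero on the open parameter region $\{a\beta_0<\sqrt r,\ \beta_0<1,\ \mathcal{R}_0=\phi_0\}$; I expect this to be a nonvanishing rational expression whose positivity is transparent once $v$ and $w$ are in hand, but the algebra is the part that needs care. Assembling (i), (ii) and the eigenvalue computation then invokes the Sotomayor theorem to conclude that \eqref{modelo2} undergoes a saddle-node bifurcation at $\mathcal{R}_0=\phi_0$.
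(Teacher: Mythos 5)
Your chosen route (Sotomayor's criteria at the coalescing equilibrium $E^\star$) is legitimate, and it is genuinely different from what the paper does: the paper computes $J(E^{\star})$ in \eqref{jacobiana_E_2}, records the zero eigenvalue as the \emph{necessary} condition, and then explicitly declines to check the nondegeneracy conditions, concluding instead via Lemma \ref{sink} that for $\beta_0<1$ and $\mathcal{R}_0$ just above $\phi_0$ the two equilibria that coalesce at $E^\star$ are a sink and a saddle. Your plan, if completed, would be at least as strong as that. But as written it has a genuine gap: the two conditions that actually make the bifurcation a saddle-node --- the transversality $w\cdot\partial_A f_0(E^{\star})\neq 0$ and the fold condition $w\cdot D^2f_0(E^{\star})(v,v)\neq 0$ --- are only announced, with the admission that the algebra ``needs care'' and the expectation that it works out. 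Since beyond the zero eigenvalue (which the paper already has) these checks are the entire content of your route, deferring them leaves the lemma unproved.

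Moreover, the sketch you give of the fold condition rests on an incomplete list of second derivatives: besides $\partial^2_{SS}\bigl(S(A-S)\bigr)=-2$ and $\partial^2_{II}\bigl(-rI/(a+I)\bigr)=2ra/(a+I)^3$, the bilinear terms $\mp\beta_0 IS$ contribute mixed derivatives $\mp\beta_0$, and these cannot be dropped because the null eigenvectors of \eqref{jacobiana_E_2} have both components nonzero: $v\propto(\beta_0,-1)$ and $w\propto\bigl(\sqrt{r}-a\beta_0,\ \sqrt{r}+\mu/\beta_0\bigr)$. Carrying the computation out (with $S^\star=\sqrt{r}+\mu/\beta_0$ and $a+I^{\star}=\sqrt{r}/\beta_0$) one finds that the first component of $D^2f_0(E^\star)(v,v)$ vanishes identically and that in the second component the cross term $-2\beta_0^2$ competes with $2ra/(a+I^{\star})^3=2a\beta_0^3/\sqrt{r}$, so that $w\cdot D^2f_0(E^{\star})(v,v)$ is proportional to $a\beta_0-\sqrt{r}$; likewise $w\cdot\partial_A f_0(E^{\star})=(\sqrt{r}-a\beta_0)S^{\star}$. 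Hence both Sotomayor conditions hold precisely when $a\beta_0\neq\sqrt{r}$ (equivalently $\phi_0<1$), a restriction invisible in your sketch and one your listed terms would have missed entirely; and $\beta_0<1$ is not what makes these quantities nonzero --- in the paper it serves (via Lemma \ref{sink}) to fix the stability types of $E_3$ and $E_4$, although it does give $\mathrm{tr}\,J(E^{\star})<0$, as you claim. So either complete the computation along these lines, or fall back on the paper's more elementary argument.
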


\begin{proof}
Let $\mathcal{R}_0^{\star} \in [0,1]$ be the $\mathcal{R}_0$ such that $\Delta = 0$ (see (\ref{DeltaR0})), \emph{i.e.}

\begin{equation*}
\label{delta_zero}
\begin{array}{lcl}
&&\left[ a\beta_0 + \mathcal{R}_0 \left(\dfrac{\mu}{\beta_0} + \dfrac{r}{a\beta_0} \right) - \dfrac{\mu}{\beta_0} \right]^2 - 4r = 0 \\
\\
&\overset{\eqref{binomio0}}{\Leftrightarrow} & a\beta_0 + \mathcal{R}_0 \left(\dfrac{\mu}{\beta_0} + \dfrac{r}{a\beta_0} \right) - \dfrac{\mu}{\beta_0} = 2\sqrt{r}  .
\end{array}
\end{equation*}

\bigskip

\noindent Let $A^{\star}$ be the associated $A$-value such that $\Delta=0$. The constant $A^{\star}$ may be calculated in  the following way:

 \begin{eqnarray}
\label{delta_zero_2}
\nonumber && \left[ a\beta_0 + \mathcal{R}_0^{\star} \left(\dfrac{\mu}{\beta_0} + \dfrac{r}{a\beta_0} \right) - \dfrac{\mu}{\beta_0} \right]^2 - 4r = 0 \\
\nonumber \\
\nonumber \Leftrightarrow&& \left[ a\beta_0 + A^{\star} - \dfrac{\mu}{\beta_0} \right]^2 - 4r = 0 \\
\nonumber \\
\nonumber \Leftrightarrow&& a\beta_0 + A^{\star} - \dfrac{\mu}{\beta_0} = 2\sqrt{r} \\
\nonumber \\
\Leftrightarrow&& A^{\star} = 2\sqrt{r} - a\beta_0 - \dfrac{\mu}{\beta_0} .
\end{eqnarray}

\noindent If $\Delta = 0$, then $E_3 \equiv E_4 \equiv E^{\star}$. One knows that $J(E^\star)$ may be written as:

\begin{equation*}
\label{jacobiana_E}
\begin{array}{lcl}
J(E^{\star})=\left(\begin{array}{cc}
-S^{\star} & -\beta_0S^{\star} \\ 
\\
\beta_0I^{\star} & \dfrac{rI^{\star}}{(a+I^{\star})^2}
\end{array}\right)
\end{array} ,
\end{equation*}

\noindent where

\begin{equation*}
\label{S_star}
\begin{array}{lcl}
S^{\star} = \dfrac{A^{\star} + a\beta_0 + \dfrac{\mu}{\beta_0}}{2} 
\qquad \text{and} \qquad I^{\star} = \dfrac{A^{\star} - a\beta_0 - \dfrac{\mu}{\beta_0}}{2\beta_0}.
\end{array}
\end{equation*}

\noindent Using (\ref{delta_zero_2}), we get:

\begin{equation}
\label{jacobiana_E_2}
\begin{array}{lcl}
J(E^{\star})=\left(\begin{array}{cccccc}
-\sqrt{r} - \dfrac{\mu}{\beta_0} &&&&& -\sqrt{r}\beta_0-\mu \\ 
\\
\sqrt{r}-a\beta_0 &&&&& \beta_0(\sqrt{r}-a\beta_0)
\end{array}\right) ,
\end{array} 
\end{equation}

\bigskip

\noindent whose eigenvalues   are $ \sqrt{r}-\frac{\mu}{\beta_0}+\beta_0\sqrt{r}-a\beta_0^2   $ and $0$.  The existence of a zero eigenvalue is a necessary condition for the existence of a saddle-node bifurcation for $f_0$ at $E^{\star}$ \cite[p.~148 (Theorem 3.4.1)]{GuckenheimerHolmes1983}. Instead of checking the nondegeneracy conditions on the nonlinear part of $f_0$ at $E^\star$, we check the emergence of two points of different stability: a sink and a saddle.  Lemma \ref{sink} completes the present proof.
\end{proof}

\begin{lem} \label{sink}
If $\beta_0<1$, then the endemic equilibrium $E_3$ is a sink and $E_4$ is a saddle.
\end{lem}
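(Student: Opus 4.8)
The plan is to prove Lemma \ref{sink} by a careful analysis of the Jacobian matrix $J(E)$ of $f_0$ at the two endemic equilibria $E_3$ and $E_4$, using the trace–determinant criterion. Recall that for a $2\times 2$ matrix, an equilibrium is a sink when the trace is negative and the determinant is positive, and a saddle when the determinant is negative. So the computation splits into two parts: (i) showing $\det J(E_4)<0$ (so $E_4$ is a saddle, regardless of the trace), and (ii) showing $\det J(E_3)>0$ together with $\operatorname{tr} J(E_3)<0$ (so $E_3$ is a sink), both under the standing assumptions, in particular $\beta_0<1$, $a\beta_0<\sqrt r$ and $\mathcal R_0$ in the relevant range.

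First I would write the Jacobian at a generic endemic equilibrium $E=(S,I)$ in a simplified form. At an endemic equilibrium the equilibrium equations $A-S-\beta_0 I=0$ and $\beta_0 S-\mu-\frac{r}{a+I}=0$ hold, so the $(1,1)$ entry $-\beta_0 I+A-2S$ simplifies to $-S$ and the $(2,2)$ entry $\beta_0 S-\mu-\frac{ra}{(a+I)^2}$ simplifies to $\frac{r}{a+I}-\frac{ra}{(a+I)^2}=\frac{rI}{(a+I)^2}$. Thus
\begin{equation*}
J(E)=\left(\begin{array}{cc} -S & -\beta_0 S \\[0.4em] \beta_0 I & \dfrac{rI}{(a+I)^2}\end{array}\right),
\end{equation*}
with determinant $\det J(E)=-\dfrac{SrI}{(a+I)^2}+\beta_0^2 SI=SI\left(\beta_0^2-\dfrac{r}{(a+I)^2}\right)$. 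Since $S,I>0$, the sign of $\det J(E)$ is the sign of $\beta_0^2(a+I)^2-r$, equivalently of $\beta_0(a+I)-\sqrt r$ (as $a+I>0$). Now $\beta_0(a+I)=a\beta_0+\beta_0 I=a\beta_0+(A-S)$, so $\det J(E)$ has the sign of $a\beta_0+A-S-\sqrt r$. Plugging in $S_3$ and $S_4$ from \eqref{eq_endemicos} and using $A-S_{3,4}=\frac{\sqrt\Delta \mp (\ldots)}{2}$-type identities (or more directly, $a\beta_0+A-S_{3}=\frac{1}{2}(a\beta_0+A-\frac{\mu}{\beta_0})+\frac{1}{2}\sqrt\Delta$ and similarly with a minus sign for $S_4$), together with the formula $\Delta=(a\beta_0+A-\frac{\mu}{\beta_0})^2-4r$ from \eqref{Delta} and Remark \ref{rem_imp} which gives $a\beta_0+A-\frac{\mu}{\beta_0}>2\sqrt r>0$, I expect to get $a\beta_0+A-S_4-\sqrt r<0$ (hence $\det J(E_4)<0$, so $E_4$ is a saddle) and $a\beta_0+A-S_3-\sqrt r>0$ (hence $\det J(E_3)>0$).

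It then remains to check $\operatorname{tr} J(E_3)<0$, i.e. $-S_3+\dfrac{rI_3}{(a+I_3)^2}<0$, equivalently $rI_3<S_3(a+I_3)^2$. Here is where the hypothesis $\beta_0<1$ should enter, since it has played no role so far. I would bound $\dfrac{rI_3}{(a+I_3)^2}\le \dfrac{r}{a+I_3}\cdot\dfrac{I_3}{a+I_3}<\dfrac{r}{a+I_3}$, and from the equilibrium relation $\dfrac{r}{a+I_3}=\beta_0 S_3-\mu<\beta_0 S_3<S_3$ using $\beta_0<1$ (and $\mu>0$). Chaining these gives $\operatorname{tr} J(E_3)=-S_3+\dfrac{rI_3}{(a+I_3)^2}<-S_3+S_3=0$, as desired. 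Combining with $\det J(E_3)>0$ shows $E_3$ is a sink, completing the proof.

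The main obstacle I anticipate is the bookkeeping in step (i): correctly evaluating the sign of $a\beta_0+A-S-\sqrt r$ at $S_3$ and $S_4$ in terms of $\sqrt\Delta$ and then invoking $\Delta\ge 0$ and $a\beta_0+A-\frac{\mu}{\beta_0}>2\sqrt r$ (Remark \ref{rem_imp}) to pin down the two signs without sign errors. The trace estimate for $E_3$, by contrast, is short once one notices that $\beta_0<1$ is exactly what makes $\frac{r}{a+I_3}<S_3$. One should also double-check that the standing hypotheses of the surrounding subsection (in particular $a\beta_0<\sqrt r$, which guarantees the endemic equilibria actually exist and that $S_3<S_4<A$ via Lemmas \ref{phi0} and \ref{phi0Positive}) are indeed in force, so that $E_3,E_4$ are well-defined real points with positive coordinates.
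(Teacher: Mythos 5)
Your proposal is correct and follows essentially the same route as the paper: evaluate the Jacobian at the endemic equilibria using the equilibrium relations, get $\det J(E_j)=S_jI_j\bigl(\beta_0^2-\tfrac{r}{(a+I_j)^2}\bigr)$, show this is positive at $E_3$ and negative at $E_4$ via $\Delta>0$ and $a\beta_0+A-\tfrac{\mu}{\beta_0}>2\sqrt r$, and then use the relation $\tfrac{r}{a+I_3}=\beta_0S_3-\mu$ together with $\beta_0<1$ to get $\operatorname{tr}J(E_3)<0$. Your inequality chain $\tfrac{rI_3}{(a+I_3)^2}<\tfrac{r}{a+I_3}=\beta_0S_3-\mu<S_3$ is just a slightly slicker packaging of the paper's factorization $\operatorname{tr}J(E_3)\propto(\beta_0-1)S_3I_3-\mu I_3-aS_3$, so no substantive difference.
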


\begin{proof}
 For $j \in \{3,4 \}$,  the jacobian matrix of $f_0$ at $E_j$ is given by:
 
\begin{equation*}
\label{jacobiana_E3}
\begin{array}{lcl}
J(E_j)=\left(\begin{array}{cc}
-S_j & -\beta_0S_j \\ 
\\
\beta_0I_j & \dfrac{rI_j}{(a+I_j)^2}
\end{array}\right) .
\end{array} 
\end{equation*}

\bigskip

\noindent Let us denote by $\det{J(E_j)}$ and $\mathrm{tr} \,J(E_j)$ the determinant and the trace of $J(E_j)$, respectively. Then we get

\begin{equation}
\label{detE3}
\begin{array}{lcl}
\det{J(E_3)} &=& - \dfrac{r I_3 S_3}{(a+I_3)^2} + \beta_0^2 I_3 S_3 \\
\\
 &=& I_3 S_3 \left( \beta_0^2 - \dfrac{r}{(a + I_3)^2} \right) .
\end{array} 
\end{equation}

\noindent Since $a + I_3 = \dfrac{A+a\beta_0-\frac{\mu}{\beta_0}+\sqrt{\Delta}}{2\beta_0}$ (\emph{cf.} (\ref{eq_endemicos})) \,and \,$\Delta = \left(a\beta_0 + A - \frac{\mu}{\beta_0} \right)^2 - 4r$   (\emph{cf.} (\ref{Delta})), then

\begin{equation*}
\label{detE3_0}
\begin{array}{lcl}
(a + I_3)^2 &=& \dfrac{\left[ a\beta_0 + A - \frac{\mu}{\beta_0} + \sqrt{\Delta} \right]^2 }{4\beta_0^2} \\
\\
&\geq & \dfrac{\left(a\beta_0 + A - \frac{\mu}{\beta_0} \right)^2}{4\beta_0^2} \\
\\
&\overset{\Delta >0}{>}& \dfrac{4r}{4\beta_0^2} = \dfrac{r}{\beta_0^2}.
\end{array} 
\end{equation*}

\noindent Therefore, $\dfrac{r}{(a+I_3)^2} < \beta_0^2$ if and only if $  \beta_0^2 - \dfrac{r}{(a + I_3)^2} > 0$. Coming back to (\ref{detE3}) we get $\det{J(E_3)} > 0.$
 Now we analyze the sign of $\mathrm{tr} \,J(E_3)$:

\begin{equation}
\label{traceE3}
\begin{array}{lcl}
\mathrm{tr} \,J(E_3) = \dfrac{rI_3 - S_3 (a+I_3)^2}{(a+I_3)^2}
\end{array} .
\end{equation}

\bigskip 

\noindent Using the second equation of (\ref{modelo2}) we know that

\begin{equation*}
\label{relations_mod_2}
\begin{array}{lcl}
&& \beta_0S_3I_3 - \mu I_3 - \dfrac{r I_3}{a + I_3}  = 0 \\
\\
\Leftrightarrow && \dfrac{r I_3}{a + I_3} = \beta_0S_3I_3 - \mu I_3 \\
\\
\Leftrightarrow && rI_3 = (a+I_3)(-\mu I_3 + \beta_0S_3I_3) ,
\end{array} 
\end{equation*}

\noindent and replacing it in (\ref{traceE3}), we deduce that

\begin{eqnarray}
\label{finaltraceE3}
\nonumber \mathrm{tr} \,J(E_3) &=&  \dfrac{(a+I_3)(-\mu I_3 + \beta_0S_3I_3) - S_3(a+I_3)^2}{(a + I_3)^2}\\
\nonumber \\
&=& \dfrac{1}{(a + I_3)^2} \big[\left(\beta_0-1\right) S_3 I_3 - \mu I_3 - aS_3 \big] .
\end{eqnarray}

Therefore, if  $\beta_0< 1$, then $\mathrm{tr} \,J(E_3) < 0$ and therefore $E_3$ is a sink. Concerning the equilibrium $E_4$, we get

\begin{equation*}
\label{detE4}
\begin{array}{lcl}
\det{J(E_4)} \,\,\, =\,\,\, - \dfrac{r I_4 S_4}{(a+I_4)^2} + \beta_0^2 I_4 S_4 \,\,\, =\,\,\,  I_4 S_4 \left( \beta_0^2 - \dfrac{r}{(a + I_4)^2} \right) .
\end{array} 
\end{equation*}

\noindent So, it is easy to conclude that if $\beta_0^2 - \dfrac{r}{(a + I_4)^2} < 0$, then $\det{J(E_4)} < 0$ and  $E_4$ is a saddle. Indeed,  the hypothesis is valid due to the following chain of equivalences:

\begin{equation*}
\label{finaldetE4}
\begin{array}{lcl}
&& \beta_0^2 - \dfrac{r}{(a + I_4)^2} < 0 \\
\\
\overset{\eqref{eq_endemicos}}{\Leftrightarrow} && \beta_0^2 < \dfrac{r}{\left( \frac{a\beta_0 + A - \frac{\mu}{\beta_0} - \sqrt{\Delta} }{2\beta_0} \right)^2}
\\
\Leftrightarrow && \left( a\beta_0 + A - \frac{\mu}{\beta_0} - \sqrt{\Delta}\right)^2 < 4r \\
\\
\Leftrightarrow &&  \left( \left( a\beta_0 + A - \frac{\mu}{\beta_0}\right)^2 - 4r \right) - 2\left( a\beta_0 + A - \frac{\mu}{\beta_0}\right)\sqrt{\Delta} + \Delta < 0 \\
\\
\overset{\eqref{Delta}}{\Leftrightarrow} &&  2\Delta - 2\left( a\beta_0 + A - \frac{\mu}{\beta_0}\right)\sqrt{\Delta} < 0 \\
\\
\Leftrightarrow && \Delta < \left( a\beta_0 + A - \frac{\mu}{\beta_0} \right)^2 \\
\\
\overset{\eqref{Delta}}{\Leftrightarrow} && \left( a\beta_0 + A - \frac{\mu}{\beta_0} \right)^2 - 4r < \left( a\beta_0 + A - \frac{\mu}{\beta_0} \right)^2 \\
\\
\Leftrightarrow && -4r < 0.
\end{array} 
\end{equation*}

\end{proof}

\begin{rem}
 The saddle-node bifurcation stated in Lemma \ref{Lemma SN} at $\mathcal{R}_0 = \phi_0$ and $\beta_0<1$ is the dynamical mechanism which explains  that $\mathcal{R}_0 < 1$ is not a sufficient condition to guarantee  {that the $I$-component vanishes} -- this mechanism is called by \emph{backward bifurcation} in \cite{Li2011}.  
\end{rem}

\subsection{Proof of Theorem \ref{th: mainA}}
\label{proof Th A}
Theorem \ref{th: mainA} follows directly from Lemma \ref{sink}  where the open subset $\mathcal{U}_1\subset \Lambda$ is defined by 

\begin{equation*}
a\beta_0<\sqrt{r}, \qquad \phi_0<\dfrac{\beta_0A}{\mu + \frac{r}{a}}= \mathcal{R}_0<1 \qquad \text{and} \qquad \beta_0<1.
\end{equation*}

Recall  that:
\begin{itemize}

\item {the condition $a\beta_0<\sqrt{r}$ forces $\phi_1 <\phi_0<1$ (see Remark \ref{rem_4} and Lemma \ref{phi0Positive}); }

 \medskip
\item  {the term $\phi_0<\mathcal{R}_0<1$ provides the existence of  two equilibria  ($E_3$ and $E_4$);}
\medskip
\item  $E_3$ is a sink provided $\beta_0<1$.
\end{itemize}

\subsection{Sensitivity analysis of $\phi_0$} \label{PHI_0_analysis}

In this subsection we will perform a sensitivity analysis of $\phi_0= 1 - \dfrac{(a\beta_0 - \sqrt{r})^2}{a\mu +r}$ (see \eqref{phi_0}) with respect to the parameters. Assuming $\phi_0$ as a smooth function of  {$a, \beta_0, r$ and $\mu$}, we can make conclusions about the instantaneous progression of the disease depending on the sign of the derivative of $\phi_0$ in order to a fixed parameter.  
In what follows, instead of $\phi_0(a, \beta_0, r, \mu)$ we simply write $\phi_0$.

\begin{lem}
\label{dphi_0}
The following inequalities hold in $\mathcal{U}_1\subset \Lambda$:
$$
\dfrac{\text{d} \phi_0}{\text{d}a} >0, \qquad \dfrac{\text{d} \phi_0}{\text{d}\beta_0} >0, \qquad \dfrac{\text{d} \phi_0}{\text{d}\mu} >0 \qquad \text{and} \qquad \dfrac{\text{d} \phi_0}{\text{d}r} <0.
$$
\end{lem}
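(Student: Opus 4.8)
The plan is to verify the four sign statements by direct differentiation, the only structural input being the strict inequality $a\beta_0 < \sqrt{r}$ (equivalently $a\beta_0 - \sqrt{r} < 0$) that holds throughout $\mathcal{U}_1$ by its very definition, together with the positivity of all parameters (condition \textbf{(C1)} and the definition of $\Lambda$). Write $\phi_0 = 1 - \frac{N}{D}$ with $N = (a\beta_0 - \sqrt{r})^2 \geq 0$ and $D = a\mu + r > 0$; all four derivatives are then quotient-rule computations whose signs are forced once one observes that the factor $a\beta_0 - \sqrt{r}$ is negative on $\mathcal{U}_1$.

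First I would handle the two easy cases. Differentiating in $\beta_0$ keeps $D$ constant, giving $\frac{\mathrm{d}\phi_0}{\mathrm{d}\beta_0} = -\frac{2a(a\beta_0 - \sqrt{r})}{a\mu + r}$, which is positive because $a>0$, $a\mu+r>0$ and $a\beta_0 - \sqrt{r} < 0$. Differentiating in $\mu$ also keeps $N$ constant, giving $\frac{\mathrm{d}\phi_0}{\mathrm{d}\mu} = \frac{a(a\beta_0 - \sqrt{r})^2}{(a\mu + r)^2} > 0$, where strict positivity uses that $a\beta_0 - \sqrt{r} \neq 0$ on $\mathcal{U}_1$.

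Next, for the $a$-derivative the quotient rule yields $\frac{\mathrm{d}\phi_0}{\mathrm{d}a} = -\frac{(a\beta_0 - \sqrt{r})\bigl(2\beta_0(a\mu+r) - \mu(a\beta_0 - \sqrt{r})\bigr)}{(a\mu+r)^2}$. Inside the bracket, $2\beta_0(a\mu+r) > 0$ and $-\mu(a\beta_0 - \sqrt{r}) > 0$, so the bracket is positive; since the leading factor $a\beta_0 - \sqrt{r}$ is negative, the whole expression is positive. Finally, for the $r$-derivative one differentiates both $N$ and $D$: using $\frac{\mathrm{d}N}{\mathrm{d}r} = \frac{\sqrt{r} - a\beta_0}{\sqrt{r}} > 0$ and $\frac{\mathrm{d}D}{\mathrm{d}r} = 1$, the quotient rule gives a numerator equal to $(\sqrt{r} - a\beta_0)\Bigl(\frac{a\mu+r}{\sqrt{r}} - (\sqrt{r} - a\beta_0)\Bigr)$, and the algebraic identity $\frac{a\mu+r}{\sqrt{r}} - (\sqrt{r} - a\beta_0) = \frac{a\mu}{\sqrt{r}} + a\beta_0 > 0$ shows this numerator is a product of positive quantities, whence $\frac{\mathrm{d}\phi_0}{\mathrm{d}r} < 0$.

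I do not expect any genuine obstacle: the statement is a routine sign analysis, and the only computation requiring a line of care is the last simplification in the $r$-derivative; everything else reduces to tracking the sign of $a\beta_0 - \sqrt{r}$, which is supplied by membership in $\mathcal{U}_1$.
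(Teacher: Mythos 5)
Your proposal is correct and follows essentially the same route as the paper: direct differentiation of $\phi_0 = 1 - (a\beta_0-\sqrt{r})^2/(a\mu+r)$ and a sign analysis that hinges on $a\beta_0 < \sqrt{r}$ (hence $a\beta_0-\sqrt{r}<0$) on $\mathcal{U}_1$, with all four computed derivatives agreeing with those displayed in the paper's proof. The only cosmetic difference is that you carry out the quotient-rule steps explicitly while the paper simply states the resulting expressions.
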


\begin{proof}
The proof of this result is straightforward. Indeed, {provided $  \sqrt{r} -a\beta_0> 0$ (see \eqref{phi_0_1_}),} we have:

\begin{equation*}
\label{aaa}
\begin{array}{lcl}
\dfrac{\text{d} \phi_0}{\text{d}a} &=& \dfrac{(-a\beta_0+\sqrt{r}) (a \beta_0 \mu+\sqrt{r} \mu+2 \beta_0 r)}{(a \mu+r)^2} \,\,\, > \,\,\, 0 \\
\\
\dfrac{\text{d} \phi_0}{\text{d}\beta_0} &=& \dfrac{2a(-a\beta_0+\sqrt{r})}{a \mu+r} \,\,\, > \,\,\,0 \\
\\
\dfrac{\text{d} \phi_0}{\text{d}\mu} &=& \dfrac{(a\beta_0-\sqrt{r})^2 a}{(a \mu+r)^2} \,\,\, > \,\,\, 0  \\
\\ 
\dfrac{\text{d} \phi_0}{\text{d}r} &=& \dfrac{a(a\beta_0-\sqrt{r})(\sqrt{r}\beta_0 + \mu)}{(a\mu+r)^2 \sqrt{r}} \,\,\, < \,\,\,0 

\end{array} 
\end{equation*}
\end{proof}

 From Lemma \ref{dphi_0}, taking into account that $\phi_0$ is a value of $\mathcal{R}_0$, for the dynamics of \eqref{modelo2} we may infer that:\\

\begin{enumerate}
\item increasing either the delay in response to treatment ($\Leftrightarrow$ saturation of health services increases) or  the death rate implies a growth of $\mathcal{R}_0$;
\\
\item if the transmission rate of the disease (in the absence of seasonality) increases, then $\mathcal{R}_0$ increases as well;
\\
\item Increasing the cure rate decreases the average number of infectious contacts of a single infected individual during the entire period they remain infectious.
\end{enumerate}

\subsection{Hopf bifurcation}
\label{APS}
In this subsection, we exhibit an open subset $\mathcal{U}_2$ of $\Lambda$ where the equilibrium $E_3$ undergoes a Hopf bifurcation generating an attracting (orientable) non trivial $T$--periodic solution, say $\mathcal{C}$. We address the reader to \cite[{pp.~150--156}]{GuckenheimerHolmes1983} for more information about the topic.  {From now on, $\mathbf{H}$ denotes  the $\mathcal{R}_0$--parameter for which the autonomous system (\ref{modelo2}) exhibits a Hopf bifurcation.}  {We remind the reader that $\mathcal{R}_0 = \frac{\beta_0A}{\mu + \frac{r}{a}}$.}

\begin{lem} \label{Hopf_Bifurcation}
If $\beta_0 > 1$ and $\mu \leq A(\beta_0-1) + a\beta_0 - 2 \sqrt{\beta_0(\beta_0-1)aA}$, then $E_3$ undergoes a supercritical Hopf bifurcation.
\end{lem}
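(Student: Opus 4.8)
The plan is to verify the standard hypotheses for a supercritical Hopf bifurcation (see \cite[pp.~150--156]{GuckenheimerHolmes1983}) for the planar autonomous field $f_0$ at the equilibrium $E_3$, using the jacobian
$$
J(E_3)=\begin{pmatrix} -S_3 & -\beta_0 S_3 \\ \beta_0 I_3 & \dfrac{rI_3}{(a+I_3)^2}\end{pmatrix}
$$
from the proof of Lemma \ref{sink}. First I would recall that $\det J(E_3)>0$ was already established in Lemma \ref{sink}, so the eigenvalues are purely imaginary precisely when $\mathrm{tr}\,J(E_3)=0$. Using the simplified expression \eqref{finaltraceE3}, namely
$$
\mathrm{tr}\,J(E_3)=\frac{1}{(a+I_3)^2}\big[(\beta_0-1)S_3I_3-\mu I_3-aS_3\big],
$$
the condition $\mathrm{tr}\,J(E_3)=0$ becomes $(\beta_0-1)S_3I_3-\mu I_3-aS_3=0$. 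The second step is to show that this vanishing locus is nonempty within $\Lambda$ exactly under the stated constraints: when $\beta_0>1$ the coefficient $(\beta_0-1)$ is positive, and substituting the explicit formulas \eqref{eq_endemicos} for $S_3=S_3(A,\dots)$ and $I_3=(A-S_3)/\beta_0$ turns $\mathrm{tr}\,J(E_3)=0$ into a condition on $A$ (equivalently on $\mathcal{R}_0$, since $\mathcal{R}_0=\beta_0A/(\mu+r/a)$). Solving for the threshold, the requirement that a real admissible value $A$ exists (with $S_3<A$, discriminant $\Delta>0$) reduces, after collecting terms, to a quadratic inequality in $\sqrt{A}$ (or in $\mu$) whose solvability is exactly $\mu\le A(\beta_0-1)+a\beta_0-2\sqrt{\beta_0(\beta_0-1)aA}$; the right-hand side is $\big(\sqrt{A(\beta_0-1)}-\sqrt{a\beta_0}\big)^2 + \text{(lower order)}$-type expression, so the bound is recognizably the boundary of a perfect-square discriminant. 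This identifies $\mathbf{H}$ as the corresponding $\mathcal{R}_0$-value.

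Next I would check the transversality (eigenvalue-crossing) condition: treating $\mathcal{R}_0$ (through $A$) as the bifurcation parameter, one computes $\frac{d}{dA}\big(\tfrac12\mathrm{tr}\,J(E_3)\big)\big|_{A=A_{\mathbf H}}\neq 0$. Since $S_3$ and $I_3$ depend smoothly and monotonically on $A$ on the relevant branch (the $E_3$ branch, where $S_3$ decreases as $A$ grows), the bracketed expression $(\beta_0-1)S_3I_3-\mu I_3-aS_3$ has nonzero $A$-derivative there, giving transversal crossing of the imaginary axis. Finally, for supercriticality I would compute the first Lyapunov coefficient $\ell_1$ at $E_3$ when $\mathrm{tr}\,J(E_3)=0$, using the standard formula for planar systems (e.g. \cite[p.~152]{GuckenheimerHolmes1983}) after putting $J(E_3)$ in the normal form $\begin{psmallmatrix}0&-\omega_0\\ \omega_0&0\end{psmallmatrix}$ with $\omega_0=\sqrt{\det J(E_3)}$; the nonlinear terms of $f_0$ are polynomial in $S$ and rational (homographic) in $I$, so all the required second- and third-order partials $f_{SS},f_{SI},f_{II},f_{SSS},\dots$ are explicit, and one shows $\ell_1<0$ on $\mathcal{U}_2$, possibly after shrinking $\mathcal{U}_2$ to an open subset where the (messy but explicit) sign of $\ell_1$ is negative.

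The main obstacle I anticipate is the first Lyapunov coefficient computation: it produces a long rational expression in $A,a,r,\beta_0,\mu$ evaluated along the codimension-one set $\mathrm{tr}\,J(E_3)=0$, and extracting a clean sufficient condition for $\ell_1<0$ is delicate. I would handle this by not insisting on the full parameter region — it suffices for the paper's purposes (Proposition \ref{prop1} and Theorem \ref{th: mainB}) to exhibit \emph{some} nonempty open $\mathcal{U}_2\subset\mathcal{U}_1$ on which all three conditions hold, so I would fix a convenient sub-slice of parameters (e.g. take $a$ small and $\beta_0$ close to a chosen value), expand $\ell_1$ to leading order there, verify the sign, and invoke continuity/openness to conclude. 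The coexistence with $\mathcal{R}_0<1$ is then inherited from $\mathcal{U}_1$, since $\mathbf{H}$ lies in $(\phi_0,1)$ by the same estimates used in Lemma \ref{sink}.
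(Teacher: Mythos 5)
Your derivation of the bifurcation conditions is essentially the paper's: set $\mathrm{tr}\,J(E_3)=0$ using \eqref{finaltraceE3}, eliminate $S_3$ via $S_3=A-\beta_0 I_3$, and obtain a quadratic in $I_3$ whose real solvability (discriminant $\Delta_2\geq 0$ together with positivity of the linear coefficient, which forces $\beta_0>1$) gives exactly $\mu \leq A(\beta_0-1)+a\beta_0-2\sqrt{\beta_0(\beta_0-1)aA}$. The use of $\det J(E_3)>0$ from Lemma \ref{sink} to get purely imaginary eigenvalues at trace zero is also fine, since that part of Lemma \ref{sink} does not use $\beta_0<1$.

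Where you diverge is in promising the full nondegeneracy package. The paper explicitly does \emph{not} verify transversality or compute the first Lyapunov coefficient; it states that instead of checking these conditions it confirms numerically the emergence of the attracting cycle $\mathcal{C}$ (Figure \ref{attr_per_sol}). Your plan to supply these analytically would strengthen the lemma, but as written it does not actually do so: (i) the transversality argument is not valid as stated --- smooth, monotone dependence of $S_3$ and $I_3$ on $A$ does not imply that the combination $(\beta_0-1)S_3I_3-\mu I_3-aS_3$ has nonzero $A$-derivative at the crossing; this needs an explicit computation (or at least an argument that the zero of the trace is simple along the $E_3$ branch); and (ii) the decisive claim behind ``supercritical,'' namely $\ell_1<0$ on some open set, is only sketched as a strategy (restrict to a sub-slice, expand, invoke continuity) with no computation carried out. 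So your proposal matches the paper on everything the paper proves, and on the remaining step it replaces the paper's numerical verification with an announced but unexecuted calculation --- the hard part of the statement is still open in your write-up.
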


\begin{proof} \label{ProofHopfBif}
The Hopf bifurcation exists when $\mathrm{tr} \,J(E_3) = 0$ and we know from (\ref{eq_endemicos}) that $S_3 = A - \beta_0 I_3$, so
\begin{eqnarray}
\label{TrE30}
&&\nonumber \mathrm{tr} \,J(E_3) = 0 \\
\nonumber \\
\nonumber \overset{\eqref{finaltraceE3}}{\Leftrightarrow}&& \dfrac{1}{(a + I_3)^2} \big[\left(\beta_0-1\right) S_3 I_3 - \mu I_3 - aS_3 \big]=0 \\
\nonumber \\
\nonumber \Leftrightarrow&& (\beta_0-1)S_3I_3 - aS_3 - \mu I_3 = 0 \\
\nonumber \\
\nonumber \Leftrightarrow&& (\beta_0-1)(A-\beta_0 I_3) I_3 - a(A-\beta_0I_3) - \mu I_3 = 0 \\
\nonumber \\
\nonumber \Leftrightarrow&& (\beta_0-1)AI_3 - \beta_0(\beta_0-1){I_3}^2 - aA + a\beta_0I_3 - \mu I_3 = 0 \\
\nonumber \\
\Leftrightarrow&& - \beta_0 (\beta_0-1){I_3}^2 + \big[ (\beta_0-1)A + a\beta_0 - \mu \big] I_3 - aA = 0 .
\end{eqnarray}
 
 The expression \eqref{TrE30} may be seen as a quadratic expression of $I_3$ with discriminant  $\Delta_2$. Hence:
 
\begin{eqnarray*}
\label{um}
\nonumber \Delta_2 &=& \big[ (\beta_0-1)A + a\beta_0 - \mu  \big]^2 - 4\beta_0(\beta_0-1)aA \\
\nonumber \\
&=& \big[ (\beta_0-1)A + a\beta_0 -\mu \big]^2 - \big[ 2\sqrt{\beta_0(\beta_0-1) aA} \big]^2 .
\end{eqnarray*}

 There are two conditions that  {must} be met at a first stage:

\begin{equation}
\label{delta_222}
\begin{array}{lcl}
\Delta_2 \geq  0
\end{array}
\end{equation}

\noindent and

\begin{eqnarray}
\label{dois}
(\beta_0-1)A + a\beta_0 - \mu > 0 \qquad \Leftrightarrow \qquad \mu < (\beta_0-1)A + a\beta_0 .
\end{eqnarray}

\medskip

\noindent {For equality \eqref{TrE30} to hold, the following condition must be satisfied:}

\begin{eqnarray*}
\nonumber \Delta_2 = \big[ (\beta_0-1)A + a\beta_0 -\mu \big]^2 - \big[ 2\sqrt{\beta_0(\beta_0-1) aA} \big]^2 \overset{(\ref{delta_222})}{\geq}  0, 
\end{eqnarray*}

\noindent which is equivalent to
\begin{eqnarray}
\tiny
\label{FromTwo}
\big[  (\beta_0-1)A + a\beta_0 - \mu - 2\sqrt{\beta_0(\beta_0-1) aA} \big] \cdot \big[  (\beta_0-1)A + a\beta_0 - \mu + 2\sqrt{\beta_0(\beta_0-1) aA} \big] \geq 0 .
\end{eqnarray}

\noindent Since $\beta_0>1$, by (\ref{dois}) one knows  that $\big[(\beta_0-1)A + a\beta_0 - \mu + 2\sqrt{\beta_0(\beta_0-1) aA} \big]>0$. Thus, for condition (\ref{FromTwo}) to be verified, we should assume that

\begin{eqnarray}
\label{FromTwo_2}
&&\nonumber \big[(\beta_0-1)A + a\beta_0 - \mu - 2\sqrt{\beta_0(\beta_0-1) aA} \big] \geq 0 \\
\nonumber \\
&\Leftrightarrow &\mu \leq (\beta_0-1)A + a\beta_0 - 2\sqrt{\beta_0(\beta_0-1) aA} .
\end{eqnarray}

\noindent Comparing expressions (\ref{dois}) and (\ref{FromTwo_2}) it is easy to check that

\begin{eqnarray*}
(\beta_0-1)A + a\beta_0 - 2\sqrt{\beta_0(\beta_0-1) aA} < (\beta_0-1)A + a\beta_0 .
\end{eqnarray*}

\noindent Then $\beta_0>1$ and $\mu \leq (\beta_0-1)A + a\beta_0 - 2\sqrt{\beta_0(\beta_0-1) aA}$ are the conditions that meet the requirements for Hopf bifurcation to exist.

 The Hopf bifurcation occurs at points where the map $Df_0(E_3)$ is not the identity, its eigenvalues  have the form $\pm i\omega$ ($\omega>0$) and satisfies the nondegeneracy conditions described in \cite[{pp.~150--156}]{GuckenheimerHolmes1983} on the nonlinear part (namely the variation's speed of the real part of the eigenvalues with respect to the parameters). Instead of verifying these additional conditions, we have checked numerically the emergence of an attracting periodic solution $\mathcal{C}$ in Figure \ref{attr_per_sol} for parameters lying in $\mathcal{U}_2$.
\end{proof}

\begin{figure}[!]
\center
\includegraphics[scale=1]{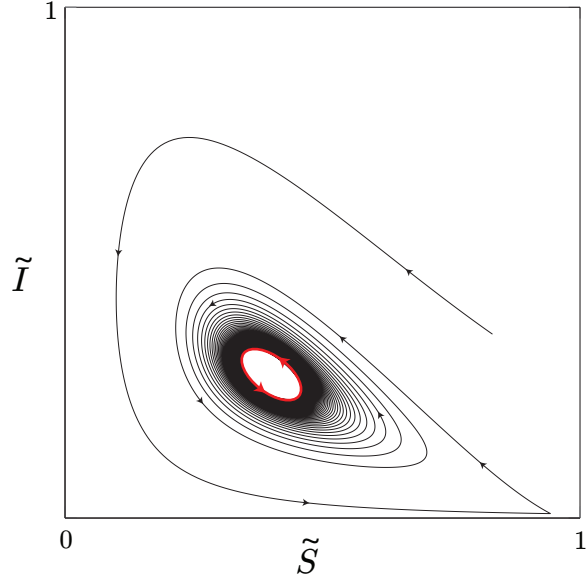}
\caption{\small Phase portrait of system \eqref{modelo2} with $\tilde{A}=0.96$, $\tilde{a}=0.14$, $\tilde{r}=0.25$, $\tilde{\beta_0} = 2$ and $\tilde{\mu} = 0.2$, with initial condition $(\tilde{S}_0,\tilde{I}_0) = (0.8333, 0.3666)$. It is stressed the existence of an attracting periodic solution $\mathcal{C}$. Arrows indicate the flow induced by $t$.}
\label{attr_per_sol}
\end{figure}

\bigskip

\subsection{Proof of Proposition \ref{prop1}}
\label{proof prop1}
Proposition \ref{prop1} is constructed directly from Lemma \ref{Hopf_Bifurcation} of Subsection \ref{APS} where the open subset $\mathcal{U}_2 \subset \Lambda$ is defined by 

\begin{equation*}
\mu \leq A (\beta_0-1) + a\beta_0 - 2 \sqrt{\beta_0(\beta_0-1)aA}, \qquad \beta_0>1 \qquad \text{and} \qquad \phi_0<\dfrac{\beta_0A}{\mu + \frac{r}{a}}<1.
\end{equation*}

\begin{rem}
A Hopf bifurcation of $E_3$ occurs for
$$\beta_0>1 \quad \text{and} \quad \mu \leq A (\beta_0-1) + a\beta_0 - 2 \sqrt{\beta_0(\beta_0-1)aA}.$$
\medbreak
We write \emph{implicitly} the value of $\mathcal{R}_0$ where this bifurcation occurs:
\begin{eqnarray*}
\label{um}
&& \mu \leq A (\beta_0-1) + a\beta_0 - 2 \sqrt{\beta_0(\beta_0-1)aA}  \\
\nonumber \\ 
&\Leftrightarrow & \mu + \frac{r}{a} \leq \frac{r}{a}+ A (\beta_0-1) + a\beta_0 - 2 \sqrt{\beta_0(\beta_0-1)aA} \\ 
&\Leftrightarrow &\frac{1}{ \mu + \frac{r}{a}} \geq \frac{1}{ \frac{r}{a}+ A (\beta_0-1) + a\beta_0 - 2 \sqrt{\beta_0(\beta_0-1)aA}} \\
&\Leftrightarrow &\mathcal{R}_0 \geq \frac{\beta_0 A}{ \frac{r}{a}+ A (\beta_0-1) + a\beta_0 - 2 \sqrt{\beta_0(\beta_0-1)aA}}=:\mathbf{H}. \\ 
\end{eqnarray*}
Any open set around the parameter values used to perform Figure \ref{attr_per_sol}   realizes values of $\mathcal{R}_0$ such that $\phi_0<\mathcal{R}_0=\mathbf{H}<1$.
\end{rem}

\begin{figure}[!]
\center
\includegraphics[scale=0.75]{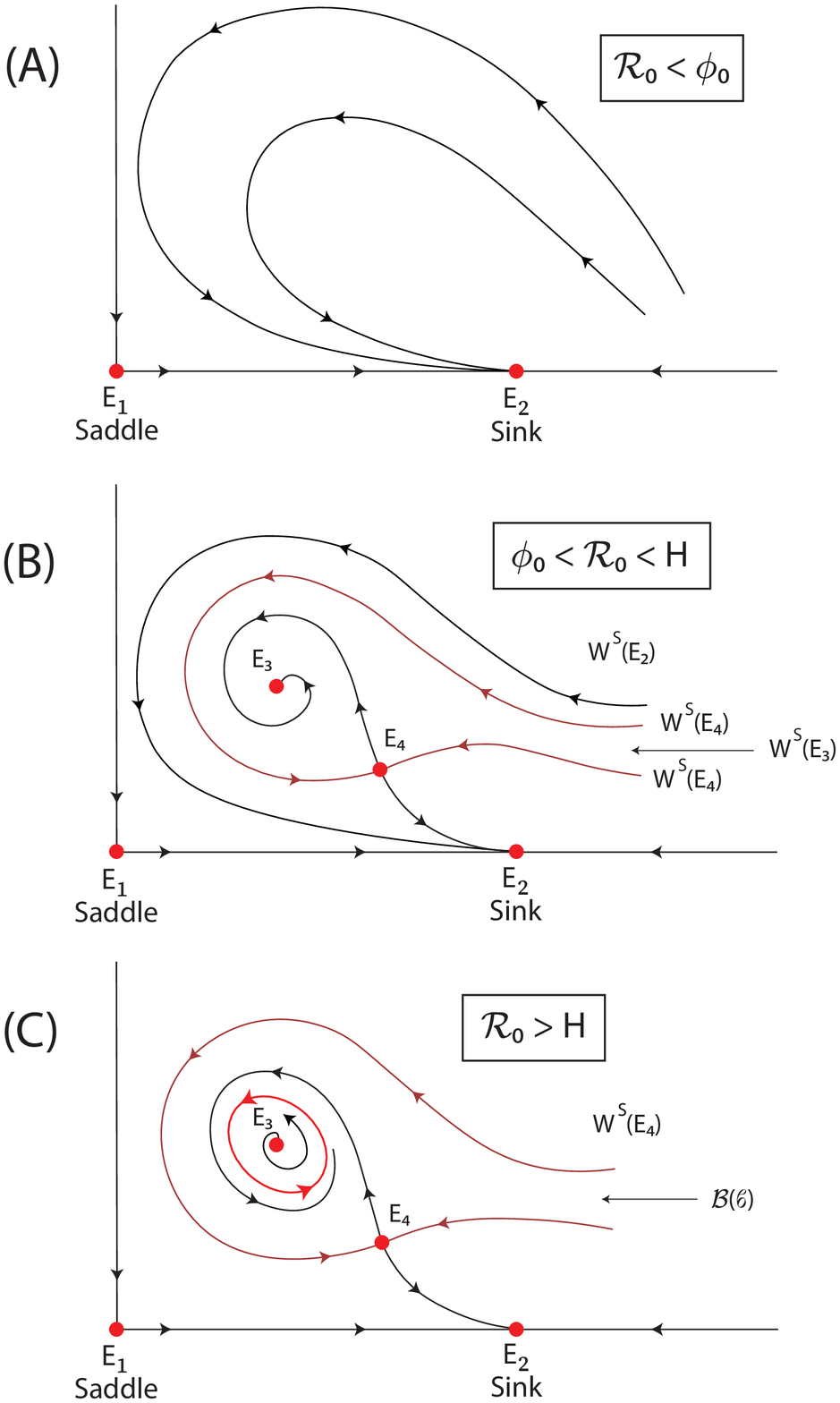}
\caption{Sketch of the phase diagram of \eqref{modelo2} for different values of $\mathcal{R}_0$ and the basin of attraction of the saddles. {\bf (A)} $\mathcal{R}_0<\phi_0$: the disease-free equilibrium $E_2$ is a global attractor. {\bf (B)} $\phi_0<\mathcal{R}_0<\mathbf{H}$: besides the disease-free equilibria $E_1$ and $E_2$, there exists a sink $E_3$ and a saddle $E_4$. {\bf (C)}  {$\mathcal{R}_0>\mathbf{H}$}: there exists an attracting limit cycle. $\mathbf{H}$ represents the Hopf bifurcation and $\mathcal{C}$ is the attracting solution. Remind that $E_1= (0,0)$ and $E_2= (A,0)$.}
\label{esquema}
\end{figure}

\section{The strange attractor}
\label{SA_lab}

In order to prove Theorem \ref{th: mainB}, we are going to make use of the Wang and Young theory on rank-one strange attractors \cite{WangYoung2003}. 
It is a comprehensive chaos theory for a non-uniformly hyperbolic setting that is flexible enough to be applicable to concrete systems
of differential equations and has experienced unprecedented growth in the last 20 years in the context of non-autonomous systems.
\subsection{Proof of Theorem \ref{th: mainB}}
\label{s: 6.1}

The proof follows from our Proposition \ref{prop1} combined with  \cite[Theorems 1 and 2]{WangYoung2003}, taking into account the following considerations:
\begin{itemize}
\item $\omega\gg 1$ by hypothesis;
\medskip
\item the term $\beta_\gamma(t) = \beta_0\left(1+\gamma \Phi(\omega t)\right) > 0$ may be seen as the radial kick of \eqref{modelo2a}; 
\medskip
\item  the non-autonomous periodic forcing of \eqref{modelo2a} is at least $C^3$ and has two nondegenerate critical points (by \textbf{(C3)});
\medskip
\item for \eqref{modelo2a}, the periodic solution $\mathcal{C}$ is attracting and orientable.
\end{itemize}

The abundance of parameters for which we observe strange attractors follows from \cite[{Section 3}]{WangOtt2008}: there exists $\varepsilon>0$ such that for Lebesgue-almost all $\gamma\in [0,\varepsilon]$, the non-wandering set associated to $f_\gamma$ has strange attractors. The chaos is realized for points that belong to the basin of attraction of $\mathcal{C}$ for $\gamma=0$ (see Fig. \ref{esquema}), that will be denoted by $\mathcal{B}(\mathcal{C})$.

 \subsection{Emergence of the strange attractor: a geometric point of view}

Under the notation established in Remark \ref{notacao_d}, in the absence of forcing ($\gamma=0$ in \eqref{modelo2a}), the picture for a supercritical Hopf bifurcation is well known: a stable equilibrium loses its stability when a pair of complex conjugate eigenvalues crosses the imaginary axis, resulting in the appearance of a limit cycle which increases in diameter as it moves away from $E_3$. Subjecting system \eqref{modelo2} to the periodic forcing $\beta_\gamma$, there is a sufficiently large frequency for which one observes  strange attractors.  We now describe the main bifurcations associated to the emergence of observable chaos.
\bigbreak
Considering   $(A,r,\beta_0,a,\mu) \in \mathcal{U}_2$, $\gamma\in [0,\varepsilon]$ and $\omega\in \mathbb{R}^+$, the model \eqref{modelo2a}  may be extended to the three-dimensional system in $\mathbb{R}^2 \times \mathbb{S}^1$, where $\mathbb{S}^1$ is a quotient space:

\smallskip

\begin{equation}
\label{modelo_novo}
\begin{cases}
\begin{array}{lcl}
\dot{S}&=&S(A-S) - \beta_0 \big(1 + \gamma \Phi (\theta)\big)IS\\
\\
\dot{I}&=&\beta_0 \big(1 + \gamma \Phi (\theta)\big)IS - \mu I - \dfrac{r I}{a + I}\\
\\	
\dot{\theta}&=& \omega .
\end{array}
\end{cases}
\end{equation}

\bigskip

\begin{lem}
\label{new_lema}
For $\gamma=0$, $\omega\in \mathbb{R}^+$ and $(A,r,\beta_0,a,\mu) \in \mathcal{U}_2$, the flow of \eqref{modelo_novo} exhibits an attracting 2-dimensional torus $\mathbb{T}$, which is normally hyperbolic.
\end{lem}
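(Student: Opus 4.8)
The key observation is that for $\gamma=0$ the augmented system \eqref{modelo_novo} is a \emph{skew product} over the rigid rotation $\dot\theta=\omega$ on $\mathbb{S}^1$: the first two equations reduce exactly to the autonomous planar system \eqref{modelo2} and carry no dependence on $\theta$, while the $\theta$-equation is independent of $(S,I)$. So the plan is, first, to invoke Proposition \ref{prop1}: for $(A,r,\beta_0,a,\mu)\in\mathcal{U}_2$ the planar flow of \eqref{modelo2} has an attracting periodic solution $\mathcal{C}$, born out of the supercritical Hopf bifurcation of $E_3$ established in Lemma \ref{Hopf_Bifurcation}. Shrinking $\mathcal{U}_2$ towards the Hopf locus $\mathcal{R}_0=\mathbf{H}$ if necessary, the Hopf Bifurcation Theorem \cite[pp.~150--156]{GuckenheimerHolmes1983} provides more: $\mathcal{C}$ is \emph{hyperbolic}, i.e.\ its nontrivial Floquet multiplier $\lambda$ (the derivative of the planar first-return map to a transversal to $\mathcal{C}$) satisfies $0<|\lambda|<1$; denote its minimal period by $T_{\mathcal{C}}$. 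This hyperbolicity is in fact the only genuinely delicate point of the argument, since Proposition \ref{prop1} by itself asserts merely that $\mathcal{C}$ is attracting, and a planar attracting cycle need not be hyperbolic; it is precisely for this reason that one restricts to parameters close to the supercritical Hopf line, where the Poincar\'e normal form forces $|\lambda|<1$ strictly.

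Granting this, I would set $\mathbb{T}:=\mathcal{C}\times\mathbb{S}^1\subset\RR^2\times\mathbb{S}^1$. Because the $(S,I)$- and $\theta$-blocks of \eqref{modelo_novo} decouple when $\gamma=0$, the set $\mathbb{T}$ is invariant under the flow $\Psi_t$ of \eqref{modelo_novo}; and since $\mathcal{C}$ is a simple closed $C^\infty$ curve on which $f_0$ does not vanish, $\mathbb{T}$ is a $C^\infty$ embedded two-torus, on which $\Psi_t$ is smoothly conjugate to a linear translation flow on $\mathbb{T}^2$ --- in particular all Lyapunov exponents tangent to $\mathbb{T}$ vanish. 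Attractivity of $\mathbb{T}$ is immediate: its basin contains the open set $\mathcal{B}(\mathcal{C})\times\mathbb{S}^1$, because any solution of \eqref{modelo_novo} whose initial planar data $(S_0,I_0)$ lie in $\mathcal{B}(\mathcal{C})$ has its $(S,I)$-component attracted to $\mathcal{C}$ while its $\theta$-component stays on $\mathbb{S}^1$.

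Finally, to establish normal hyperbolicity I would exhibit the $D\Psi_t$-invariant splitting $T(\RR^2\times\mathbb{S}^1)\big|_{\mathbb{T}}=T\mathbb{T}\oplus N$, where the line bundle $N$ is obtained by transporting over each point of $\mathbb{T}$ the strong-stable eigendirection of the monodromy of $\mathcal{C}$ inside the $(S,I)$-plane (the $\theta$-direction already belongs to $T\mathbb{T}$). Since the planar equations ignore $\theta$, the differential $D\Psi_t$ preserves this splitting and acts on $N$ exactly as the variational flow of \eqref{modelo2} transverse to $\mathcal{C}$, contracting at the exponential rate $\tfrac{1}{T_{\mathcal{C}}}\log|\lambda|<0$, whereas on $T\mathbb{T}$ it is the derivative of a translation flow and all of its exponents equal $0$. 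Because $\tfrac{1}{T_{\mathcal{C}}}\log|\lambda|<0$ is strictly smaller than every tangential rate, the normal contraction dominates the tangential behaviour to all orders, so $\mathbb{T}$ is $r$-normally hyperbolic for every $r\ge 1$ in the sense of the Hirsch--Pugh--Shub / Fenichel theory of normally hyperbolic invariant manifolds; this is the asserted normal hyperbolicity, and it also yields that $\mathbb{T}$ persists as a smooth attracting invariant manifold under small perturbations, a fact to be exploited once the forcing $\gamma>0$ is switched on.
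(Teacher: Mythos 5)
Your construction is the same as the paper's: take the attracting planar cycle $\mathcal{C}$ from Proposition \ref{prop1}, use the product structure at $\gamma=0$ to get the invariant torus $\mathbb{T}=\mathcal{C}\times\mathbb{S}^1$, and conclude attractivity from the basin $\mathcal{B}(\mathcal{C})\times\mathbb{S}^1$. Where you genuinely diverge is the normal hyperbolicity step. The paper disposes of it in one sentence, asserting that normal hyperbolicity ``follows from the attractiveness of the torus'' with a citation to Hirsch--Pugh--Shub; as a general implication this is not true (an attracting invariant torus over a merely attracting, non-hyperbolic cycle need not be normally hyperbolic), so the paper is implicitly relying on the cycle being exponentially attracting without saying so. You identify exactly this as the delicate point: you secure a nontrivial Floquet multiplier $|\lambda|<1$ by working near the supercritical Hopf locus (at the mild cost of possibly shrinking $\mathcal{U}_2$, which is harmless since only some nonempty open set is needed downstream in Theorem \ref{th: mainB}), and then exhibit the $D\Psi_t$-invariant splitting $T\mathbb{T}\oplus N$ with normal contraction rate $\frac{1}{T_{\mathcal{C}}}\log\lambda<0$ dominating the zero tangential exponents of the translation flow, giving $r$-normal hyperbolicity for all $r$. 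So your argument is correct and in fact more rigorous than the printed proof: it buys an honest verification of the hypothesis needed to invoke the invariant-manifold theory (and hence persistence of $\mathbb{T}_\gamma$ for $\gamma\gtrsim 0$), whereas the paper's shortcut buys brevity but leaves the key hyperbolicity of $\mathcal{C}$ implicit, resting on the (numerically checked) supercritical character of the Hopf bifurcation in Lemma \ref{Hopf_Bifurcation}.
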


\begin{proof}
If $(A,r,\beta_0,a,\mu) \in \mathcal{U}_2$, then the dynamics of \eqref{modelo_novo} restricted to the plane $(S,I)$ has an attracting non trivial periodic solution (\emph{cf.} Proposition \ref{prop1}). Adding the phase component $\dot{\theta} = \omega$, $\omega > 0$, it yields an attracting 2-dimensional torus. Normal hyperbolicity follows from the attractiveness of the torus  \cite{HPS}.
\end{proof}

For  $\omega\in \mathbb{R}^+$ and $(A,r,\beta_0,a,\mu) \in \mathcal{U}_2$, the torus $\mathbb{T}$ of Lemma \ref{new_lema} persists for $\gamma\gtrsim 0$. Let us denote by $\mathbb{T}_\gamma$ its hyperbolic continuation. For $\gamma = 0$ and $\omega\in \mathbb{R}^+$ fixed, take a cross section $\Sigma$ to $\mathbb{T}_0$ in such a way that $\Sigma \cap \mathbb{T}_0$ is a smooth invariant curve $\mathcal{C}$ diffeomorphic to a circle. For $\omega \in \mathbb{R}^+$, at least one of the eigenvalues of $\mathrm{d}\mathcal{G}_{(0,\omega)}|_\mathcal{C}$  has modulus less then 1. 

For $\gamma > 0$ small and $\omega\in \mathbb{R}^+$ fixed, let $\mathcal{G}_{(\gamma, \omega)}$ be the first return map to $\Sigma$ defined in $(\mathcal{B}(\mathcal{C}) \times \mathbb{S}^1)\cap \Sigma$ (basin of attraction of $\mathbb{T}_0$ restricted to $\Sigma$), which is well defined.

\begin{figure}[!]
\center
\includegraphics[scale=0.45]{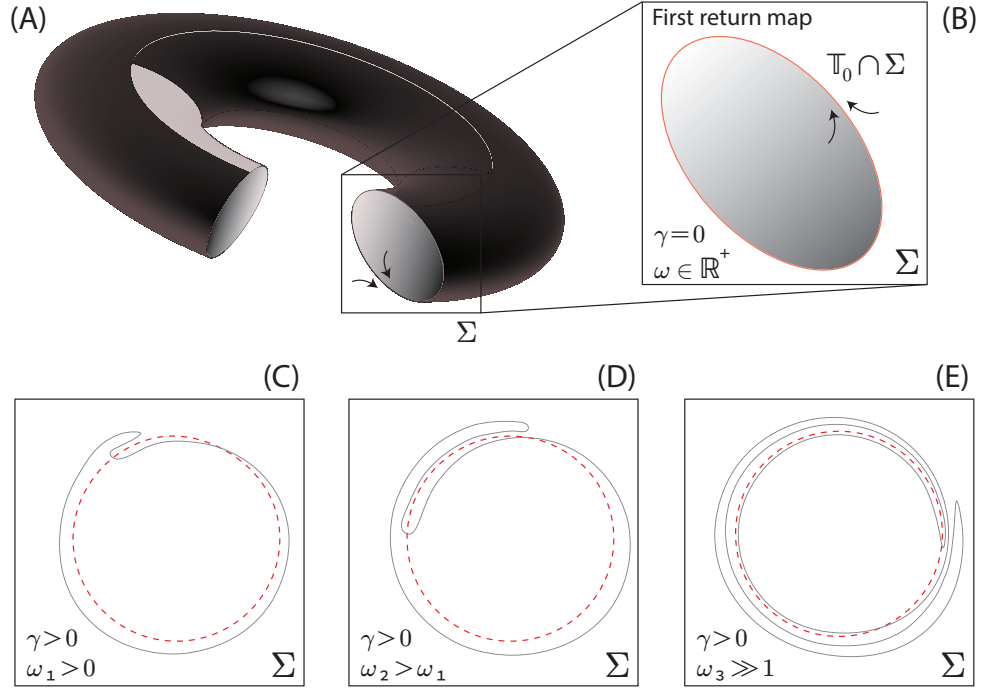}
\caption{\small {\bf (A):} Attracting two-dimensional torus $\mathbb{T}_0$ of system \eqref{modelo_novo} with $\gamma=0$, $\omega\in \mathbb{R}^+$ and $(A,r,\beta_0,a,\mu) \in \mathcal{U}_2$. {\bf (B):} Attracting curve associated to the first return map to a  section $\Sigma$ transverse to $\mathbb{T}_0$ (same parameter conditions as in \textbf{(A)}). {\bf (C)--(E):} Topological horseshoes and emergence of strange attractors for \eqref{modelo_novo} with $\gamma>0$, $0 < \omega_1 < \omega_2 < \omega_3$, $\omega_3 \gg 1$ and $(A,r,\beta_0,a,\mu) \in \mathcal{U}_2$.}
\label{TORO}
\end{figure}

For $\gamma>0$ fixed, if $\omega > 0$ then the attracting torus starts to disintegrate into a finite collection of periodic saddles and sinks, a phenomenon occurring within an ``Arnold tongue'', developing horseshoes (subsets topologically conjugate to a full shift over a finite number of symbols \cite{Passeggi2018, WangYoung2002}), as suggested in Figure \ref{TORO} {\bf (C)}. Once they appear, they persist and  correspond to what the authors of \cite{WangYoung2003} call \emph{transient chaos}.

 As $\omega$ gets larger, the initial deformation on the attracting torus introduced by the perturbing term $\gamma \Phi(\theta)$ is exaggerated further, giving rise to  strange attractors created by stretch-and-fold type actions  -- \emph{sustained chaos}  \cite{WangYoung2003}. The strange attractors contain, but do not coincide with, topological horseshoes.  This is precisely the main difference between our proof and that of \cite{Barrientos2017}.

\subsection{Backward bifurcation}
Following \cite{Li2011}, backward bifurcations occur when multiple stable equilibria
coexist in an epidemic model with $\mathcal{R}_0<1$. 
  If the $I$-component of the initial conditions
is sufficiently small (in its early stage), then trajectories will approach the
disease-free equilibrium $E_2$ and the \emph{Infectious} will be eradicated.
Nevertheless, if the initial conditions are large ($I$-component is large), then the system
will approach the endemic equilibrium $E_3$ and the \emph{Infectious} will
persist.  See Figure \ref{fig_bifurcation} for an illustrative scheme of this description.

\begin{center}
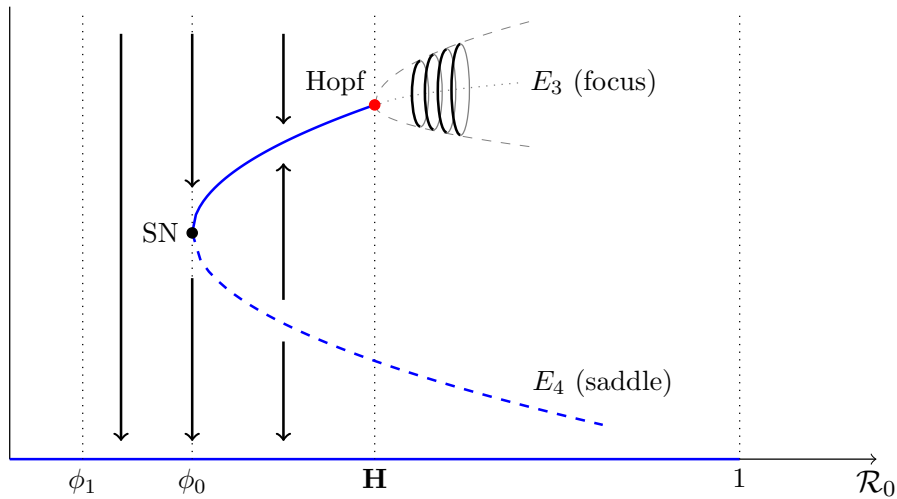
\begin{figure}[ht!]
\scalemath{1.2}{
\begin{tikzpicture}

\draw [-] (-2,5)--(-2,0); 
\draw [->] (-2,0)--(7.5,0) node[below, scale = 0.9]{$\mathcal{R}_0$}; 
\draw [-,blue,thick] (-2,0)--(6,0) node[below, scale = 0.9]{};


\draw [dotted] (-1.2,4.9)--(-1.2,0) node[below, scale = 0.8]{\textcolor{black}{$\phi_1$}}; 

\draw [dotted] (2,4.9)--(2,0) node[below, scale = 0.8]{\textcolor{black}{$\mathbf{H}$}}; 

\draw [<-, thick] (-0.78,0.2)--(-0.78,4.7); 

\draw [dotted] (0,4.9)--(0,0) node[below, scale = 0.8]{\textcolor{black}{$\phi_0$}}; 

\draw [<-, thick] (0,3)--(0,4.7); 
\draw [<-, thick] (0,0.2)--(0,2); 

\draw [->, thick] (1,4.7)--(1,3.7); 
\draw [<-, thick] (1,3.27)--(1,1.76); 
\draw [<-, thick] (1,0.2)--(1,1.3); 

\draw [dotted] (6,4.9)--(6,0) node[below, scale = 0.8]{\textcolor{black}{1}}; 
\draw [blue, thick, domain=0:2, samples=50] plot(\x, {2.5+sqrt(\x)}); 
\draw [blue, dashed, thick, domain=0:4.5, samples=50] plot(\x, {2.5-sqrt(\x)}) node[above=0.5em, black, scale = 0.8] {$E_4$ (saddle)}; 
\filldraw[black] (0,2.5) circle (1.6pt) node[left=0.1em, scale = 0.8] {SN}; 

\draw [gray, very thin, dashed, domain=0:1.7, samples=50] plot(\x+2, {3.91421356237+sqrt(\x/2)}); 
\draw [gray, dotted, domain=0:1.6, samples=50] plot(\x+2, {3.87+sqrt(\x/19)}) node[black, right, scale = 0.8] {$E_3$ (focus)}; 
\draw [gray, very thin, dashed, domain=0:1.7, samples=50] plot(\x+2, {3.91421356237-sqrt(\x/8)}); 

\draw[thick] (2.5,4.399) arc(90:270:2.6pt and 10.28pt);
\draw[gray, thin] (2.5,4.402) arc(90:-90:2.6pt and 10.39pt); 
\draw[thick] (2.645,4.469) arc(90:270:2.7pt and 11.75pt);
\draw[gray, thin] (2.645,4.473) arc(90:-90:2.7pt and 11.90pt);
\draw[thick] (2.79,4.528) arc(90:270:2.8pt and 13.04pt);
\draw[gray, thin] (2.79,4.534) arc(90:-90:2.8pt and 13.2pt); 
\draw[thick] (2.94,4.587) arc(90:270:2.8pt and 14.30pt);
\draw[gray, thin] (2.94,4.587) arc(90:-90:2.8pt and 14.35pt); 

\filldraw[red] (2,3.91421356237) circle (1.6pt) node[above left, scale = 0.8] {\textcolor{black}{Hopf}}; 

\end{tikzpicture}
}
\caption{\small Schematic bifurcations and stability of the endemic equilibria for   model \eqref{modelo2}. The sink $E_3$ undergoes a supercritical Hopf bifurcation (at $\mathbf{H}$) giving rise to an attracting periodic solution. For $\mathcal{R}_0 = \phi_0$, two endemic equilibria are born (saddle-node bifurcation (at SN)), a focus $E_3$ (stable for $\mathcal{R}_0 < \mathbf{H}$ and unstable for $\mathcal{R}_0 > \mathbf{H}$) and a saddle $E_4$. The value $\phi_1$ is the threshold above which $S_3<S_4<A$. Bold arrows indicate the stability of the endemic equilibria.}
\label{fig_bifurcation}
\end{figure}
\end{center}

\section{Discussion and final remarks}
\label{discussion}

In this paper, we analyzed a periodically-forced dynamical system inspired by the SIR endemic model through the addition of a non-autonomous term of the form $$\beta_\gamma(t) = \beta_0\left(1+\gamma \Phi(\omega t)\right).$$ 
As far as we know, this work is the first analytical investigation of the interplay between seasonality, deterministic dynamics and the persistence of strange attractors in this class of biologically inspired models.

\subsection{Results}
We proved that, under particular conditions for the autonomous model \eqref{modelo2} with $\gamma=0$, two endemic equilibria exist for a \emph{basic reproduction number} $\mathcal{R}_0 $ less than 1. More precisely, in Theorem \ref{th: mainA}, we have exhibited an open set in the space of parameters, for which $\mathcal{R}_0 < 1$ and the $I$-component persists in a robust way through two endemic equilibria: one saddle and one sink. For $\mathcal{R}_0 <1$, the sink undergoes a Hopf bifurcation yielding  an attracting periodic solution (Proposition \ref{prop1}). 

For $\mathcal{R}_0 < 1$ and $\omega \gg 1$, using the theory of  strange attractors developed in \cite{WangYoung2003}, we proved in Theorem \ref{th: mainB} that the flow of \eqref{modelo} exhibits abundant strange attractors, meaning that the  {$I$-component does not vanish} and its control may not be  possible. A partial scheme of our conclusions is illustrated in Figure \ref{fig_bifurcation}. For $\gamma > 0$, the numerical description of the dynamics of \eqref{modelo2a} when $\omega$ varies is similar to what has been described in \cite{CastroRodrigues2021}.

Seasonal variations may be captured by introducing periodically-perturbed terms into a deterministic differential equation \cite{Keeling2001}. The periodically-perturbed term $\Phi(t)$ in $\beta_{\gamma}(t)$ may be seen as a ``natural'' $2\pi$-periodic map over the time with two global extrema (governing the high and lower seasons defined by weather conditions). The parameter $\omega$ governs the frequency of $\Phi$, which may be interpreted as a seasonal constraint and varies according to political reasons or scholar holidays, for instance. The complete analysis lies beyond the scope of this article.

\subsection{Strengths and limitations}
Our study has been concentrated in a nonlinear forced model inspired by the problem of modeling infectious diseases. We were not concerned about the validity of the biological value of the model but rather on  the analytical proof of the existence of strange attractors which is, in general, a difficult task.

Our method to find observable chaos (in the terminology of  \cite{WangYoung2002}) cannot be directly used in the classic SIR model \cite{Kermack1932}. It is necessary to include both the logistic growth in \eqref{modelo} in order to define an open set of parameters for which a supercritical Hopf bifurcation happens. Yielding an invariant torus by the lifting process $\dot{\theta}=\omega$, Torus-breakdown theory may be applied and the abundance of strange attractors follows.  The proof of  Theorem \ref{th: mainB} is valid for all models undergoing Bogdanov-Takens bifurcations with a curve of supercritical Hopf bifurcations.

\subsection{Literature}
In a similar model, using the theory developed in \cite{Medio2009, HerreraZanolin2014}, the authors of \cite{Barrientos2017} have  proved the existence of chaotic dynamics (hyperbolic topological \emph{horseshoes}),  not necessarily observable in numerics, through   a step function.  Our contribution goes further since we have been able to prove the existence of \emph{persistent strange attractors}.  Moreover, our results are consistent with the empirical belief that intense seasonality induces chaos \cite{Keeling2001,Barrientos2017,Duarte2021}.

The authors of \cite{Perez2019} studied an adapted SIR model where the incidence rate  was constant instead of a periodic map.  Their main goal was to prove Bogdanov-Takens bifurcations. Since these phenomena have codimension 2, they are difficult to be found. Our goal is different: we relate  $\mathcal{R}_0<1$ with the  persistence of the $I$-component (autonomous case) and the existence of sustainable chaos (periodically-perturbed case) -- see also \cite{Barrientos2017,Li2011}.

\subsection{Future work: the vaccination}
The existence of  persistent strange attractors may be seen  as an undesirable phenomenon associated to unpredictability. As
a consequence, the problem of converting chaos into regular motions becomes
particularly relevant.  In the context of system \eqref{modelo},  avoid stochastic dynamics might be performed by the introduction of a periodically-perturbed term modeling a seasonal vaccination strategy, say $v(t)$. Numerical simulations of \cite{Duarte2021} show that the phase difference
between the two periodic functions (contact rate $\beta(t)$ and vaccination $v(t)$) might play an important role in controlling chaos.  
The rationale for the vaccination policy is to ensure that the proportion of susceptive individuals would stay below a given threshold. 

We guess that if the frequency of $v(t)$ is sufficiently close to the frequency of $\beta(t)$ and separated by a phase constant, then strange attractors are no longer possible and we may stabilize the dynamics.  The proof of this conjecture is an ongoing work.

\section*{Acknowledgements}
The  authors are grateful to the three reviewers for the corrections and suggestions which helped to improve the readability of this manuscript.

\end{document}